\documentclass{amsart}
\usepackage{graphicx}
\usepackage{amssymb,amsmath,amsthm}
\usepackage{amsfonts}
\vfuzz2pt 
\hfuzz2pt 
\newtheorem{theorem}{Theorem}[section]
\newtheorem{corollary}[theorem]{Corollary}
\newtheorem{lemma}[theorem]{Lemma}
\newtheorem{proposition}[theorem]{Proposition}
\theoremstyle{definition}
\newtheorem{definition}[theorem]{Definition}
\theoremstyle{remark}

\numberwithin{equation}{section}

\newcommand{\cpmor}{\stackrel{cd}{-\hspace{-5pt}\twoheadrightarrow}}

\newcommand{\Cf}{{\bf C}}
\newcommand{\X}{\mathfrak X}
\newcommand{\D}{{[\ne]}}
\newcommand{\DE}{{\langle\ne\rangle}}
\newcommand{\I}{{\bf I}}
\newcommand{\romb}{\Diamond}
\begin{document}

\title{Topological Modal Logics with Difference Modality}%
\author{Kudinov Andrey}%
\address{}%
\email{kudinov--at--iitp--dot--ru}%

\keywords{modal logic, difference modality, spacial reasoning, topological semantics }%


\begin{abstract}
We consider propositional modal logic with two modal operators
$\Box$ and $\D$. In topological semantics $\Box$ is interpreted
as an interior operator and $\D$ as difference. We show that
some important topological properties are expressible in this
language. In addition, we present a few logics and proofs of
f.m.p. and of completeness theorems.
\end{abstract}
\maketitle

\section{Introduction.} \label{Part0}
This paper deals with the topological semantics of modal logic.
The study of topological semantics of modal logic was started in
1944 by McKinsey and Tarski \cite{McKinsey}. Recently, this
topic has been attracting more attention partly due to
applications in AI (cf. \cite{RCC} and \cite{ManyDim}).
Reading the modal box as an interior operator one can easily show
that logic of all topological spaces is $\mathbf{S4}$. In
addition, McKinsey and Tarski proved that $\mathbf{S4}$ is also
the complete logic of the reals, Cantor space and indeed of any
metric separable space without isolated points (for a new proof of
this fact see \cite{Aiello}). Therefore, all these spaces are
modally equivalent, hence many natural properties of topological
spaces such as connectedness, density-it-itself and $T_1$ are
undefinable. For more information on spatial logics and spatial
reasoning see \cite{Aiello, Gabelaia, Shehtman,
ShehtmanDis}.

There are two ways of enriching the definability of a language: to
change semantics or to extend the language. According to the first
way, Esakia in \cite{Esakia} and Shehtman in
\cite{ShehtmanDis} considered the derivational logic (more
recent paper on this \cite{BEG}). According to the other way,
we can add the universal modality. In this new language we can
express connectedness (cf. \cite{Shehtman}).

In this paper, however, we add difference modality (or modality of
inequality) $\D$, interpreted as ``true everywhere except
here". Difference modality was suggested to use by several people
independently (in \cite{GPT} for one). More deeply this
modality and its interpretation in Kripke frames were studied in
\cite{Rijke}. It has been shown that difference modality
increase greatly the expressive power of a language (cf.
\cite{Goranko,Rijke}). The expressive power of this language
in topological spaces has been studied by Gabelia in
\cite{Gabelaia}, the author presented axioms that defines $T_1$
and $T_0$ spaces. Being added to the topological modal logic, the
difference modality allows us to express topological properties
that were unreachable before. The topological properties mentioned
in the end of the first paragraph became definable. The universal
modality is expressible as well in the following way: $[\forall] A
= \D A \land A$.

Here we also introduce three logics: $\mathbf{S4D}$,
$\mathbf{S4DS}$, and $\mathbf{S4DT_1S}$. We prove their f.m.p. and
following completeness theorems: $\mathbf{S4D}$ is complete with
respect to all topological spaces (Theorem \ref{theorS4D}),
$\mathbf{S4DS}$ is complete with respect to all dense-in-itself
topological spaces (Theorem \ref{TheorS4DS}), and
$\mathbf{S4DT_1S}$ is complete with respect to any
zero-dimensional dense-in-itself metric space (Theorem
\ref{theorS4DT1S}).

\section{Definitions and basic notions.}
Let us introduce some notations the reader will meet in this
paper. Assume that $B$ is a set, $R,R'\subseteq B\times B$ are
relations on $B$, then
$$
\begin{array}{l}
R\upharpoonright_A = R\cap (A\times A), \mbox{ for any } A\subseteq B;\\
Id_B = \{(x,x)\,|\,x\in B\};\\
R^+ = R\cup Id_B;\\
R\circ R' = \{(x,z)\,|\,\exists y\; (xRy\; \&\; yR'z)\};\\

R^1 = R,\; R^n = R^{n-1}\circ R;\\
R^* = \bigcup_{n=1}^\infty R^n.
\end{array}
$$
In this paper, we study propositional modal logics with two modal
operators, $\Box$ and $\D$. A formula is defined as follows:
$$
\phi ::= p\; | \;\bot \; | \; \phi \to \phi \; | \; \Box \phi \; |
\; \D \phi.
$$

The standard classic logic operators ($\vee, \wedge, \neg, \top,
\equiv$) are expressed in terms of $\to$ and $\bot$. The dual
modal operators $\romb, \DE$ are defined in the usual way as
$\romb A = \neg \Box \neg A$, $\DE A= \neg \D \neg A$
respectively. $[\forall] A$ stands for $\D A \land A$.

\begin{definition} A \emph{bimodal logic\/} (or \emph{a logic,\/} for short) is
a set of modal formulas closed under Substitution $\left
(\frac{A(p_i)}{A(B)}\right )$, Modus Ponens $\left (\frac{A,\,
A\to B}{B}\right )$ and two Generalization rules $\left
(\frac{A}{\Box A},\; \frac{A}{\D A}\right )$; containing all
classic tautologies and the following axioms
 $$
 \begin {array}{l}
 \Box (p\to q)\to (\Box p\to \Box q),\\
 \D(p\to q)\to (\D p\to \D q).
 \end{array}
$$

${\bf K_2}$ denotes \emph{the minimal bimodal logic}.
\end{definition}

Let L be a logic and let $\Gamma$ be a set of formulas, then $L +
\Gamma$ denotes the minimal logic containing $L$ and $\Gamma$. If
$\Gamma = \{ A \}$, then we write $L+A$ rather then $L+  \{ A \}$
.

In this paper, however, we consider a few additional axioms:
 $$
 \begin {array}{ccl}
 (B_D) &\quad& p \to \D\DE p\\
 (4^-_D) && (p \land \D p) \to \D\D p\\
 (T_\Box) && \Box p \to p      \\
 (4_\Box) && \Box p \to \Box \Box p\\
 (D_\Box) && [\forall] p \to \Box p\\
 (AT_1) && \D p \to \D \Box p\\
 (DS) && \D p \to \romb p\\
 \end {array}
 $$

The first two axioms are for $\D$ and they are from the paper
by de Rijke \cite{Rijke}. These axioms correspond to some basic
properties of inequality: symmetry and
pseudo-transitivity\footnote{In this paper relation $R$ is
pseudo-transitive iff $R^+$ is transitive. In some papers this
property calls weakly transitive (cf. \cite{Esakia, BEG})}
respectively.

The next two axioms are axioms for \textbf{S4}. These axioms have
well-known correspondence to the properties of topological
interior operator: $\I Y \subseteq Y$ and $\I Y \subseteq
\I\I Y$ respectively (where $Y$ is an arbitrary set). We
denote the interior and the closure operators by $\I$ and
$\Cf$ respectively.

Axiom $(D_\Box)$ is needed to connect $\Box$ and $\D$ and to
make sure that $[\forall]$ is the universal modality.

The meaning of the next two axioms will be explained later.

In this paper we study the following three logics:
$$
\begin {array}{l}
{\bf S4D}={\bf K_2}+\{B_D,4^-_D,D_\Box,T_\Box,4_\Box\},\\
{\bf S4DS}={\bf S4D}+DS,\\
{\bf S4DT_1S}={\bf S4DS}+AT_1.\\
\end{array}
$$

\section{Topological models.}

Let us define topological models.

\begin{definition} A \emph{topological model} is a pair $(\X,
\theta)$, where $\X$ is a topological space and $\theta$ is a
function assigning to each proposition letter $p$ a subset
$\theta(p)$ of $\X$. The function $\theta$ is called \emph{a
valuation}.
\end{definition}

\begin{definition}\label{Dmodality} The truth of a formula at a
point of a topological model is defined by induction:
$$
 \begin {array}{rlcl}
 (\mbox{i})&\X, \theta, x\models p & iff &  x\in \theta(p)\\
 (\mbox{ii})&\X, \theta, x\not\models \perp && \\
 (\mbox{iii})&\X, \theta, x\models \phi \to \psi & iff
 & \X, \theta, x\not\models \phi \;or\; \X, \theta, x\models\psi \\
 (\mbox{iv})&\X, \theta, x\models \Box\phi & iff &
 \mbox{there is a neighborhood $U$ of } x \mbox{ such }\\
 &&&\mbox{that for any }y\in U \quad\X, \theta, y\models \phi\\
 (\mbox{v})&\X, \theta, x\models \D \phi & iff
 & \X, \theta, y\models \phi \mbox{ for any } y\neq x\\

 \end{array}
 $$
\end{definition}

If $U$ is a subset of $\X$, then $\X, \theta, U \models A$
denotes that \mbox{$\X,\theta, x \models A$} for any $x\in U$.
A formula $A$ is called \emph{valid} in a topological space
$\X$ (notation: $\X\models A$), if it is true at any point
under any valuation. Also in notation $\X,\theta, x\models A$
we will omit the space and/or the valuation, if it is clear what
space and/or valuation we consider.

\begin{definition}
The \emph{D-logic} of a class of topological spaces $\mathcal{T}$
(in notation $L_D(\mathcal{T})$) is the set of all formulas that
are valid in all topological spaces from $\mathcal{T}$.
\end{definition}

Let us describe the classes of topological spaces axiomatized by
$(AT_1)$, $(DS)$.

\begin{definition}\label{defT1} A \emph{$T_1$-space} is a topological
space such that all its one-element subsets are closed.
\end{definition}

As we mentioned in introduction there is an axiom that defines
$T_1$ spaces in \cite{Gabelaia}, but it has a little bit
different form. And due to the next lemma they are equivalent on
topological spaces.

\begin{lemma}
Let $\X$ be a topological space then $\X \models AT_1$ iff
$\;\X$ is a $T_1$-space.
\end{lemma}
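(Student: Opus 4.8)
The plan is to unwind the topological semantics of $AT_1$ and show it is equivalent, point by point, to the closedness of singletons. Recall that $AT_1$ is the formula $\D p \to \D \Box p$, so $\X \models AT_1$ means that for every valuation $\theta$ and every point $x$, if $\X,\theta,x \models \D p$ then $\X,\theta,x \models \D \Box p$. Translating clause (v) of Definition \ref{Dmodality}: $\X,\theta,x\models \D p$ says $\theta(p) \supseteq \X \setminus \{x\}$, and $\X,\theta,x \models \D\Box p$ says $\I(\theta(p)) \supseteq \X\setminus\{x\}$, i.e. every point $y \neq x$ is interior to $\theta(p)$. So validity of $AT_1$ is precisely the statement: for every set $A \subseteq \X$ with $A \supseteq \X\setminus\{x\}$, every $y\neq x$ lies in $\I(A)$. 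Since the only sets $A$ with $A\supseteq \X\setminus\{x\}$ are $\X\setminus\{x\}$ and $\X$ itself, and the latter is trivial, the content is: $\X\setminus\{x\} \subseteq \I(\X\setminus\{x\})$, i.e. $\X\setminus\{x\}$ is open, i.e. $\{x\}$ is closed.

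For the forward direction, I would assume $\X\models AT_1$ and fix an arbitrary point $x_0$. Define the valuation $\theta$ by $\theta(p) = \X\setminus\{x_0\}$ (the value on other letters is irrelevant). Then $\X,\theta,x_0\models \D p$ by clause (v), so $\X,\theta,x_0\models \D\Box p$, which gives $\X,\theta,y\models \Box p$ for every $y\neq x_0$; hence each such $y$ has a neighborhood contained in $\X\setminus\{x_0\}$, so $\X\setminus\{x_0\}$ is open and $\{x_0\}$ is closed. Since $x_0$ was arbitrary, $\X$ is $T_1$.

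For the converse, assume $\X$ is a $T_1$-space, and fix any valuation $\theta$ and any point $x$ with $\X,\theta,x\models\D p$; then $\theta(p) \supseteq \X\setminus\{x\}$. If $\theta(p) = \X$ then $\Box p$ holds everywhere and we are done; otherwise $\theta(p) = \X\setminus\{x\}$, which is open because $\{x\}$ is closed. Then for every $y\neq x$ we have $y \in \theta(p)$ and $\theta(p)$ is a neighborhood of $y$ witnessing $\X,\theta,y\models\Box p$, so $\X,\theta,x\models\D\Box p$. Hence $\X,\theta,x\models AT_1$ for all $\theta$ and $x$, i.e. $\X\models AT_1$.

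There is no real obstacle here; the only thing to be careful about is the quantifier structure in clause (v) — $\D\phi$ at $x$ is about all points $y\neq x$, not about a neighborhood — and the bookkeeping of which sets can arise as $\theta(p)$ when $\D p$ holds at $x$ (only $\X$ and $\X\setminus\{x\}$), which trivializes the ``arbitrary set'' quantifier implicit in the validity condition. I would present it as the two implications above, each a couple of lines.
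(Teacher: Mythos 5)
Your proof is correct and follows essentially the same route as the paper: both directions use the valuation $\theta(p)=\X\setminus\{x\}$ and reduce validity of $AT_1$ at $x$ to openness of $\X\setminus\{x\}$ (the paper merely phrases the forward direction ad absurdum, while you argue it directly). No gaps.
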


\begin{proof} ($\Rightarrow$) Ad absurdum. Suppose there exists $x\in \X$
such that $\{ x\}$ is not closed. Hence
 $\X - \{x\} \ne \I(\X-\{x\})$. Let $U=\X-\{x\}$. There exists

\begin{equation}
\label{yU} y\in U-\I U
\end{equation}

We take a valuation $\theta$ in $\X$ such that $\theta(p)=U$.
Then $x\models \D p$ but $x\models AT_1$; hence $x\models
\D\Box p$. Since $y\ne x$, we have $y\models \Box p$, which
means that $y$ together with some its neighborhood is in $U$. This
contradicts to (\ref{yU}).

($\Leftarrow$) Assume that $\X$ is a $T_1$-space. Let $\X,
\theta, x \models \D p$ then $\theta(p)\supseteq \X-\{x\}$.
We need to prove that $x \models \D \Box p$. It means that for
all $y\in \X-\{x\}$ $ y \models \Box p$. Take any $y\in
\X-\{x\}$. Since $\X-\{x\}$ is open, there exists an open
$U\ni y$ and $U\subseteq \X-\{x\}$. So $ U\models p$, then
$y\models \Box p $, hence $x \models \D \Box p$.

\end{proof}

\begin{definition}\label{DefDS} Let $\X$ be a topological space. A point $x\in
\X$ is called isolated, if $\{x\}$ is open. $\X$ is called
\emph{dense-in-itself}, if it has no isolated points.
\end{definition}

\begin{lemma}
Let $\X$ be a topological space then $\X\models DS$ iff
$\;\X$ is dense-in-itself.
\end{lemma}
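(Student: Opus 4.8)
The plan is to unwind the topological semantics of $\D$ and of $\romb$ and play them against the definition of an isolated point. By Definition \ref{Dmodality}(v), $x\models\D p$ holds precisely when $\theta(p)\supseteq\X-\{x\}$; and, dualizing clause (iv), $x\models\romb p$ holds precisely when every (open) neighborhood $U$ of $x$ satisfies $U\cap\theta(p)\ne\emptyset$, i.e. $x\in\Cf(\theta(p))$. Thus validity of $(DS)$ at $x$ says: whenever $p$ is true everywhere except possibly at $x$, every neighborhood of $x$ still meets $\theta(p)$.

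For the direction ($\Leftarrow$), I would assume $\X$ is dense-in-itself, fix an arbitrary valuation $\theta$, a point $x$, and suppose $x\models\D p$, so $\theta(p)\supseteq\X-\{x\}$. Given any open $U\ni x$, density-in-itself forbids $U=\{x\}$ (such a $U$ would make $x$ isolated), so $U$ contains some $y\ne x$; then $y\in\theta(p)$, whence $U\cap\theta(p)\ne\emptyset$. As $U$ was arbitrary, $x\in\Cf(\theta(p))$, i.e. $x\models\romb p$. Since $\theta$, $x$, $p$ were arbitrary, $\X\models DS$.

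For ($\Rightarrow$), I would argue contrapositively: suppose $\X$ has an isolated point $x$, so $\{x\}$ is open. Choose the valuation $\theta(p)=\X-\{x\}$. Then $x\models\D p$ trivially, but the neighborhood $\{x\}$ of $x$ is disjoint from $\theta(p)$, so $x\not\models\romb p$; hence $(DS)$ fails at $x$ under $\theta$, and $\X\not\models DS$.

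The argument is routine and I do not anticipate a real obstacle; the only points needing a little care are using "no isolated points" in the usable form "no nonempty open set is a singleton" in the ($\Leftarrow$) direction, and checking that $\{x\}$ genuinely counts as an open neighborhood of $x$ in the ($\Rightarrow$) direction so that it witnesses the failure of $\romb p$.
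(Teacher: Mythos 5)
Your proof is correct and takes essentially the same approach as the paper: the ($\Rightarrow$) direction uses the identical valuation $\theta(p)=\X-\{x\}$ with the open singleton $\{x\}$ as the witness, and your ($\Leftarrow$) direction is the paper's argument that $x\in\Cf(\X-\{x\})$ when $x$ is not isolated, merely streamlined by treating $\theta(p)\supseteq\X-\{x\}$ uniformly instead of splitting into the two cases $\theta(p)=\X$ and $\theta(p)=\X-\{x\}$.
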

\begin{proof} ($\Rightarrow$) Ad absurdum. Assume that $\X$ is not
dense-in-itself and $x\in \X$ is isolated.

Let us take a valuation $\theta$ in $\X$ such that
$\theta(p)=\X-\{x\}$; then $x \models \D p$. Since $\{x\}$
is open and $x\models \neg p$, it follows that $x \models \Box\neg
p$ or equivalently, $x \models \neg \romb p$. This contradicts
to the axiom $(DS)$.

($\Leftarrow$) Assume that $\X$ is dense-in-itself and
$(\X,\theta),x\models \D p$; then there are two cases: \\
(i) $\theta(p)=\X$, in this case it is obvious that
$(\X,\theta),x\models \romb p$; \\
(ii) $\theta(p) = \X-\{x\}$, then $(\X,\theta),x\models
\romb p$ since $x\in \Cf(\X-\{x\})=\X$.
\end{proof}

\section{Kripke frames and models.}

Kripke frames and models are well-known basic notions of modal
logic (cf. \cite{BRV} and \cite{Chagrov}).

\begin{definition} A \emph{Kripke frame}\/ is a tuple $F
= (W,R_1, \ldots R_n)$ such that
$$
\begin{array}{rl}
\mbox{(i)}& \mbox{$W$ is a non-empty set,}\\
\mbox{(ii)}&\mbox{$R_i$ for $i=1\ldots n$ are binary relations on
$W$.}
\end{array}
$$
\end{definition}

In this paper however, we consider Kripke frames with one or two
relations only. The first is denoted as $R$ and the second (if it
is present) --- as $R_D$.

\begin{definition}
A \emph{Kripke model}\/ is a pair $\mathcal{M} = (F, \theta)$,
where $F$ is a frame and $\theta$ is a valuation (a function from
the set of all proposition letters to the set of all subsets of
$W$).
\end{definition}

$\mathcal{M}, x \models A$ denotes that formula $A$ is true in
model $\mathcal{M}$ at point $x$; $\mathcal{M} \models A$ denotes
that $A$ is true at all points of model $\mathcal{M}$; $F \models
A$ denotes that $(F,\theta),x \models A$ for all valuations
$\theta$ and all points $x \in W$; $F, x \models A$ denotes that
$(F,\theta),x \models A$ for all valuations $\theta$. For a subset
$U\subseteq W$ $\mathcal{M}, U \models A$ denotes that for any
$x\in U$ $(\mathcal{M}, x \models A)$.

\begin{definition}
The \emph{logic} of a class of frames $\mathcal{F}$ (in notation
$L(\mathcal{F})$) is the set of all formulas that are valid in all
frames from $\mathcal{T}$. For a single frame $F$, $L(F)$ stands
for $L(\{F\})$.
\end{definition}

\begin{definition}  A frame $F$ is called \emph{a $\Lambda$-frame}
for a modal logic $\Lambda$, if $\Lambda \subseteq L(F)$.
\end{definition}

\begin{definition}
A \emph{$p$-morphism} from a Kripke frame $F=(W,R,R_D)$ onto a
Kripke frame $F'=(W',R',R_D')$ is a map $f:W\to W'$ satisfying the
following conditions:
\begin{enumerate}
  \item $f$ is surjective;
  \item $\forall x \forall y (xRy \Rightarrow f(x) R' f(y)$ and
  the same for $R_D$ and $R_D'$;
  \item $\forall x \forall z (f(x)R'z \Rightarrow \exists y (x R y \&
  f(y)=z))$ and the same for $R_D$ and $R_D'$.
\end{enumerate}
In notation: $f: F \twoheadrightarrow F'$.
\end{definition}

\begin{definition}
By cone $F^x$ we will understand the frame $$(W^x,
R\upharpoonright_{W^x}, R_D\upharpoonright_{W^x} ), $$ where $W^x
= ( R \cup R_D)^+(x)$. If for some $x$ $F=F^x$ then $F$ called
\emph{rooted}.
\end{definition}

The following two lemmas are well-known (cf. \cite{BRV} and
\cite{Chagrov}).

\begin{lemma}\label{LemGen} Let $F=(W, \ldots )$ be a Kripke frame, then
$$L(F) = \bigcap \{L(F^x\;|\;x\in W\}.$$
\end{lemma}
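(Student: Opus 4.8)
The statement is the standard generation lemma: $L(F) = \bigcap \{L(F^x) \mid x \in W\}$. The plan is to prove the two inclusions separately, using the fact that $p$-morphic images and generated subframes preserve validity, which in turn rests on the invariance of truth under generated submodels.

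First I would handle the inclusion $L(F) \subseteq \bigcap_x L(F^x)$. For this, observe that for each $x$ the cone $F^x$ is a generated subframe of $F$: its carrier $W^x = (R \cup R_D)^+(x)$ is closed under both $R$ and $R_D$ by construction. Given any $\Lambda$-failure in $F^x$, i.e.\ a valuation $\theta'$ and point $y \in W^x$ with $(F^x,\theta'), y \not\models A$, I would extend $\theta'$ to a valuation $\theta$ on all of $W$ arbitrarily (say $\theta(p) \cap W^x = \theta'(p)$) and show by induction on the structure of $A$ that truth at points of $W^x$ agrees in $(F,\theta)$ and $(F^x,\theta')$; the modal cases go through because the relevant $R$- and $R_D$-successors of a point in $W^x$ all lie inside $W^x$. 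Hence $A \notin L(F^x)$ implies $A \notin L(F)$, giving the inclusion in contrapositive form.

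For the reverse inclusion $\bigcap_x L(F^x) \subseteq L(F)$, suppose $A \notin L(F)$. Then there is a valuation $\theta$ and a point $x \in W$ with $(F,\theta), x \not\models A$. By the generated-submodel agreement just established (applied to this particular $x$), restricting $\theta$ to $W^x$ yields $(F^x, \theta\!\upharpoonright_{W^x}), x \not\models A$, so $A \notin L(F^x)$, and therefore $A \notin \bigcap_x L(F^x)$. Combining the two inclusions gives the equality.

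The only real content is the generated-submodel truth-preservation induction, and its only subtle point is checking that the clause for $\Box$ and for $\D$ genuinely stays inside the cone: one needs $W^x$ to be $R$-closed and $R_D$-closed, which is exactly why the cone is defined with $(R \cup R_D)^+(x)$ rather than, say, $R^*(x)$. Everything else — the propositional cases, the base case, the contrapositive bookkeeping — is routine. Since the paper explicitly flags this as well-known and cites \cite{BRV} and \cite{Chagrov}, I would keep the write-up to a few lines, stating the generated-submodel agreement as the key observation and deriving both inclusions from it.
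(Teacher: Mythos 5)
Your proof is correct and is exactly the standard generated-subframe truth-preservation argument; the paper itself gives no proof of this lemma, simply citing \cite{BRV} and \cite{Chagrov} as it is well-known, so you are supplying precisely the argument those references contain. One small caveat: with the paper's convention $R^+ = R \cup Id$, the carrier $W^x = (R\cup R_D)^+(x)$ is closed under $R$ and $R_D$ not ``by construction'' for an arbitrary frame but only because the frames in question are $\mathbf{S4D}$-frames (where $R_D\cup Id$ is transitive and contains $R$); for a genuinely arbitrary frame your induction would need $W^x$ to be taken as the reflexive-transitive closure of $R\cup R_D$ applied to $x$.
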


\begin{lemma}($p$-morphism Lemma)
$f: F \twoheadrightarrow F'$ implies $L(F)\subseteq L(F')$.
\end{lemma}

In this paper we consider only {\bf S4D}-frames. The axioms $B_D,
4^-_D,D_\Box,T_\Box,4_\Box$ put constraints on relations $R$ and
$R_D$. So from now on we assume that all Kripke frames satisfy the
following conditions:
\begin{itemize}
\item $R$ is reflexive (axiom $T_\Box$) and transitive ($4_\Box$),
\item $R_D$ is symmetric ($B_D$)
\item $R_D$ is pseudo-transitive ($4^-_D$),
\item $R\subseteq R_D \cup Id_W$
($D_\Box$).
\end{itemize}

Note that we can further assume that $R_D \cup Id = W\times W$,
because according to Lemma \ref{LemGen} we can consider only
generated subframes.

Now let us see what formulas $AT_1$ and $DS$ mean in a Kripke
frame.

Let $F=(W,R,R_D)$ be a {\bf S4D}-frame, then $Top(F)=Top(W,R)$
denotes the topological space on the set $W$ with the topology
$\left\{ R(V) \;|\; V\subseteq W \right\}$. For formulas with the
difference modality the validity in $F$ and $Top(F)$ may not be
equivalent.This is because $R_D$ could be not the real inequality
relation.

\begin{definition} Let $R$ be a transitive reflexive relation on
$W$. Then $x\in W$ is called \emph{$R$-minimal} (respectively
$R$-maximal), if for any $y$, $yRx$ (respectively $xRy$) implies
$x=y$.
\end{definition}

\begin{definition} Let $F=(W,R,R_D)$ be an
$\mathbf{S4D}$-frame; we say that $F$ is a \emph{$T_1$-frame} (or
has the \emph{$T_1$--property}), if all $R_D$-irreflexive points
are $R$-minimal.
\end{definition}

\begin{lemma}
\label{PropF} Let $F=(W,R,R_D)$ be $\mathbf{S4D}$-frame. Then
$F\models AT_1$ iff
$F$ is a $T_1$-frame. 
\end{lemma}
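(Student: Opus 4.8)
The plan is to prove both directions by contraposition, following the same strategy as in the earlier Lemma for topological spaces but now working inside the Kripke frame. Recall that $AT_1$ is $\D p \to \D \Box p$, that $R$ is reflexive and transitive, that $R_D$ is symmetric and pseudo-transitive, and that $R \subseteq R_D \cup Id_W$. Being a $T_1$-frame means: every $R_D$-irreflexive point is $R$-minimal. So I want to show $F \models \D p \to \D \Box p$ exactly when this holds.

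First I would handle $(\Leftarrow)$, assuming $F$ is a $T_1$-frame. Fix a valuation $\theta$ and a point $x$ with $(F,\theta), x \models \D p$; I must show $(F,\theta), x \models \D \Box p$, i.e. for every $y$ with $x R_D y$ we have $y \models \Box p$. Take such a $y$. From $x R_D y$ and symmetry we get $y R_D x$, so in particular $y$ is $R_D$-reflexive or not; the key point is that $x \models \D p$ means $p$ holds at every $R_D$-successor of $x$. To get $y \models \Box p$ I need $p$ to hold on some whole $R$-neighborhood $R(y)$ of $y$; so I need: for every $z$ with $y R z$, either $z = x$ is ruled out or $z \models p$. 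Take $z$ with $y R z$. If $z \ne x$, then since $R \subseteq R_D \cup Id$ we can't immediately conclude $x R_D z$; instead I argue via $x R_D y$ and pseudo-transitivity. The clean route: if $z$ is $R_D$-irreflexive then by the $T_1$-property $z$ is $R$-minimal, and since $y R z$ this forces... hmm, actually I should chase it the other way — I think the correct use is that any $z$ in $R(y)$ with $z \ne y$: from $y R_D x$, $y R z$, and $D_\Box$ giving $y R_D z$ or $y = z$; combined with pseudo-transitivity of $R_D$ ($R_D^+$ transitive) and symmetry, one deduces $x R_D z$ or $z = x$; the $T_1$-property is what excludes the bad case where $z = x$ lies in the open set $R(y)$ but $x$ is not $R$-minimal. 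So the whole $R$-neighborhood of $y$ (excluding possibly only $x$, which can't actually occur because $x$ would then fail to be $R$-minimal while being $R_D$-reflexive status forces it into the $\D$-range) satisfies $p$, giving $y \models \Box p$.

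For $(\Rightarrow)$ I argue contrapositively: assume $F$ is not a $T_1$-frame, so there is an $R_D$-irreflexive point $u$ that is not $R$-minimal; pick $v \ne u$ with $v R u$. Define the valuation $\theta(p) = W \setminus \{u\}$. Since $u$ is $R_D$-irreflexive, $u$ has no $R_D$-successor equal to $u$, and (using $R_D \cup Id = W \times W$, which we may assume by the remark after Lemma~\ref{LemGen}) every point other than $u$ is an $R_D$-successor of $u$; all of those satisfy $p$, so $u \models \D p$. On the other hand $v \ne u$, so to refute $\D\Box p$ at $u$ it suffices to show $v \not\models \Box p$: every open neighborhood of $v$, i.e. every set of the form $R(V)$ containing $v$, actually every $R(v)$, contains $u$ because $v R u$, and $u \notin \theta(p)$; hence $v \not\models \Box p$, so $u \not\models \D\Box p$, so $F \not\models AT_1$.

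The main obstacle is the forward-direction bookkeeping in $(\Leftarrow)$: pinning down exactly why the point $x$ cannot be the sole defect inside the open set $R(y)$. The delicate interplay is between $D_\Box$ ($yRz \Rightarrow y R_D z$ or $y=z$), symmetry and pseudo-transitivity of $R_D$ (so that $x R_D y$ and $y R_D z$ yield $x R_D z$ or $x=z$, and the $x = z$ alternative), and then the $T_1$-property applied to $z = x$: if $x$ were $R_D$-irreflexive it would be $R$-minimal, but $y R x$ with $y \ne x$ would contradict minimality unless $y=x$, which is excluded; and if $x$ is $R_D$-reflexive then $x R_D x$ puts $x$ in the range of $\D$ from itself, wait — more carefully, the genuinely needed observation is that $z \ne y$ with $y R z$ forces $y R_D z$, then $x R_D y$ plus pseudo-transitivity forces $x R_D z$ or $x = z$; in the first case $z \models p$ since $x \models \D p$; the second case $z = x$ is the one the $T_1$-property must kill, and it does because $x R_D y$ makes $x$ $R_D$-irreflexive-or-not and in either subcase minimality of an $R_D$-irreflexive $x$ contradicts $y R x$ with $y \ne x$. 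I expect this case analysis to be the only part requiring real care; everything else is a direct unwinding of the semantic clauses.
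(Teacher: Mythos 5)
Your proof is correct, and your $(\Rightarrow)$ direction is essentially the paper's: the same valuation $\theta(p)=W-\{u\}$, the same witness pair, and the same observation that the non-minimal predecessor is an $R_D$-successor of the irreflexive point. (Note that this last step does not actually need $R_D\cup Id = W\times W$: from $vRu$ and $v\ne u$ you get $vR_Du$ by $D_\Box$ and then $uR_Dv$ by symmetry, so your appeal to generated subframes there is dispensable — which is just as well, since for an arbitrary non-rooted $F$ the universality of $R_D\cup Id$ is not automatic.) Where you genuinely diverge is in $(\Leftarrow)$. The paper first passes to the cone $F^x$, where $R_D(x)\cup\{x\}=W$, and then splits on whether $xR_Dx$: in the reflexive case $p$ holds everywhere; in the irreflexive case $p$ holds everywhere except at $x$, and $R$-minimality of $x$ keeps $x$ out of $R(y)$ for every $y\ne x$. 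You instead stay in the arbitrary frame and chase relations locally: for $xR_Dy$, $yRz$, $z\ne y$, axiom $D_\Box$ gives $yR_Dz$, pseudo-transitivity gives $xR_Dz$ or $x=z$, and the case $x=z$ is disposed of by the dichotomy on $xR_Dx$ (reflexive: $x\models p$ directly from $x\models\D p$; irreflexive: $R$-minimality contradicts $yRx$ with $y\ne x$, where $y\ne x$ holds because $y=x$ would force $xR_Dx$). Both arguments are sound; the paper's reduction buys a shorter case analysis, while yours avoids having to justify that the cone satisfies $R_D(x)\cup\{x\}=W$ (which itself uses symmetry and pseudo-transitivity). One presentational slip: your closing summary claims the $z=x$ case is killed ``in either subcase'' by minimality of an $R_D$-irreflexive $x$; that is wrong for the $R_D$-reflexive subcase, where the case is not impossible but merely harmless ($xR_Dx$ and $x\models\D p$ give $x\models p$). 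Since you state the correct handling earlier in the same paragraph, this is a defect of exposition, not a gap in the argument.
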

\begin{proof} ($\Rightarrow$) Suppose $F\models AT_1$ and
there exists an $R$-non-minimal and $R_D$-irreflexive point in
$F$. To be more specific, let $x$ and $y$ be two different points
such that $\neg x R_D x$ and $yRx$. Take a valuation $\theta$ such
that $\theta(p)=W-\{x\}$. Then $x\models \D p$ and $x\models
\neg p$, thus $y\models \romb \neg p$. Since $x\ne y$ and
$yRx$, we have $xR_Dy$, $x\models \DE\romb\neg p$. Hence
$x\models \neg\D\Box p$. This contradicts $x\models AT_1$.

($\Leftarrow$) Assume that $F$ is a $T_1$-frame and for some
valuation for $F$ we have $x\models\D p$. Let us show that
$x\models \D\Box p$. As we mentioned above, generated subframes
preserve validity, so we can assume that $F=F^x$ hence $R_D(x)\cup
\{x\} = W$. There are two possibilities:

1) $xR_Dx$. Then $y\models p$ for any $y\in W$, hence for all
$y\in W$ we have $y\models \Box p$; so $x\models \D\Box p$.

2) $\neg xR_Dx$. Then $y\models p$ for every $y\ne x$. By
assumption, $y\ne x$, $yRz$ implies $z\ne x$, hence $z\models p$.
So for any $y\ne x$ $y\models \Box p$; hence $x\models \D\Box
p$.
\end{proof}

\begin{definition} Let
$F=(W,R,R_D)$ be an $\mathbf{S4D}$-frame; we say that $F$ is a
\emph{$DS$-frame}, if every $R_D$-irreflexive point has an
R-successor (called just a \emph{successor} further on).
\end{definition}

\begin{lemma} Let $F=(W,R,R_D)$ be an $\mathbf{S4D}$-frame. Then
$F\models DS$ iff $F$ is a $DS$-frame.
\end{lemma}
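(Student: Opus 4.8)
The plan is to prove both directions of the equivalence $F \models DS$ iff every $R_D$-irreflexive point has an $R$-successor, working with the frame semantics for $\D$ and $\romb$ and mimicking the structure of the preceding lemma for $AT_1$.

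For the ($\Rightarrow$) direction I would argue by contraposition. Suppose $F$ is not a $DS$-frame, so there is an $R_D$-irreflexive point $x$ with no $R$-successor. Since $R$ is reflexive, "no $R$-successor" must be read in the strict sense: $R(x) = \{x\}$, i.e.\ $x$ is $R$-minimal and isolated in $Top(F)$. I would then define a valuation $\theta$ with $\theta(p) = W - \{x\}$. Using that $x$ is $R_D$-irreflexive, every $y \neq x$ satisfies $y \models p$, so $x \models \D p$. On the other hand, since $R(x) = \{x\}$ and $x \not\models p$, no $R$-successor of $x$ satisfies $p$, hence $x \not\models \romb p$. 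This contradicts $x \models \D p \to \romb p$, so $F \models DS$ forces the $DS$-frame property.

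For the ($\Leftarrow$) direction, assume $F$ is a $DS$-frame and fix a valuation and a point $x$ with $x \models \D p$; I must show $x \models \romb p$. As in the $AT_1$ lemma I can pass to the generated subcone and assume $R_D(x) \cup \{x\} = W$. Split into two cases. If $xR_Dx$, then every point of $W$ satisfies $p$, and by reflexivity of $R$ we get $x \models \romb p$ immediately. If $\neg xR_Dx$, then every $y \neq x$ satisfies $p$; by the $DS$-frame hypothesis $x$ has an $R$-successor $z$, and $z \neq x$ (since $x$ is $R$-minimal would only happen with no proper successor, so a genuine successor is distinct — here one must be slightly careful: the definition says "has an $R$-successor"; combined with reflexivity this should mean a $y$ with $xRy$ and $y \neq x$). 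Then $z \models p$ and $xRz$, so $x \models \romb p$.

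The main subtlety, and the step I would be most careful about, is the precise reading of "$R$-successor" in the definition of a $DS$-frame: because $R$ is reflexive, every point trivially $R$-relates to itself, so the intended meaning must be the existence of a \emph{proper} successor (equivalently, the point is not $R$-minimal / not isolated in $Top(F)$). I would make that reading explicit, perhaps with a one-line remark, and then both directions go through cleanly by the argument above. No deep obstacle remains once that convention is pinned down — the rest is the same two-case bookkeeping used in the $AT_1$ lemma.
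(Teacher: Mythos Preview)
Your proposal is correct and follows essentially the same argument as the paper: the same counterexample valuation $\theta(p)=W-\{x\}$ for ($\Rightarrow$), and the same reflexive/irreflexive case split for ($\Leftarrow$). Your explicit remark that ``$R$-successor'' must mean a \emph{proper} successor is exactly the reading the paper uses (it writes ``there exists $y\ne x$ such that $xRy$'' in the irreflexive case), and your passage to the generated cone is what the paper handles via its standing assumption $R_D\cup Id_W = W\times W$.
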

\begin{proof} ($\Rightarrow$) Suppose $F \models DS$ and
there exists an $R_D$-irreflexive point $x$ without successors. We
take a valuation $\theta$ such that $\theta(p)=W-\{x\}$; then
$x\models \D p$ but $x\not\models \romb p$. This contradicts
$F \models DS$.

($\Leftarrow$) Suppose that every $R_D$-irreflexive point in $F$
has a successor. Let us prove that for any $x\in W$ $x\models DS$.
Suppose $(F,\theta),x\models \D p$, then there are two cases:
(i) $x$ is $R_D$-reflexive; then $\theta(p)=W$, and so $x \models
\romb p$ since $R$ is reflexive; (ii) $x$ is $R_D$-irreflexive,
then $\theta(p)\supseteq W-\{x\}$, and by our assumption, there
exists $y\ne x$ such that $xRy$; hence $y\models p$ and $x\models
\romb p$. \end{proof}

\section{Kripke completeness and finite model property.}

All our axioms are Sahlqvist formulas. So we easily obtain Kripke
completeness for logics \textbf{S4D}, $\bf S4DS$, $\bf S4DT_1S$.

Following the common way of proving f.m.p.\ we use filtration (cf.
\cite{Chagrov} and \cite{BRV}).

\begin{definition} Let  $M=(F,\theta)$ be a Kripke
model, where $F=(W, R, R_{D})$ is a Kripke frame and $\Psi$ is a
set of formulas closed under subformulas. Let $\approx_\Psi$ be
the equivalence relation on the elements of $W$ defined as
follows:
$$
w \approx_\Psi v \mbox{ iff for all $\phi$ in $\Psi$:} (M,w\models
\psi \mbox{ iff } M,v \models \phi).
$$
By $[w]$ we denote the equivalence class of $w$. Suppose
$M'=(F',\theta ')$ and $F'=(W',R',R_D')$ such that

\begin{enumerate}
\item $W'=W_\Psi=\{[w]\;|\; w\in W\}$.
\item \label{defil1} If $wRv$ then $[w]R'[v]$ (and
similarly for $R_{D}$),
\item \label{defil3}
If $[w]R'[v]$ then for all $\Box \phi\in \Psi$; $M,w\models\Box
\phi$ only if $M,v\models \phi$ (and similarly for $R_{D}$ and
$\D$).
\item \label{defil2}
$\theta '(p)=\{[w]\;|\; M,w\models p\}$, for all atomic symbols
$p$ in $\Psi$.
 \end{enumerate}
Then $M'$ is called a filtration of $M$ through $\Psi$.
\end{definition}

\begin{lemma} \emph{(Filtration Lemma)} Let $M'$ be a
filtration of $M$ through $\Psi$, then for any $x\in M_1$ and for
any $\psi\in \Psi$
$$
M,w\models \psi \iff M',[w] \models \psi.
$$
\end{lemma}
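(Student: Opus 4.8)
The plan is to prove the Filtration Lemma by induction on the structure of the formula $\psi \in \Psi$, using crucially that $\Psi$ is closed under subformulas (so every subformula of $\psi$ is also in $\Psi$ and the induction hypothesis applies to it).

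\textbf{Base case and Boolean connectives.} For $\psi = p$ an atomic symbol, the equivalence $M,w\models p \iff M',[w]\models p$ is immediate from clause (\ref{defil2}) of the definition of filtration. For $\psi = \bot$ both sides are false. For $\psi = \phi_1 \to \phi_2$, since $\phi_1,\phi_2 \in \Psi$ the induction hypothesis applies to both, and the truth condition (iii) for $\to$ is the same in $M$ and $M'$, so the equivalence transfers.

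\textbf{Modal case for $\Box$.} Suppose $\psi = \Box\phi$, so $\phi \in \Psi$. For the forward direction, assume $M,w\models\Box\phi$. Take any $[v]$ with $[w]R'[v]$; by clause (\ref{defil3}) we get $M,v\models\phi$, and by the induction hypothesis $M',[v]\models\phi$. Since $[v]$ was arbitrary, $M',[w]\models\Box\phi$. For the backward direction, assume $M',[w]\models\Box\phi$. Take any $v$ with $wRv$; by clause (\ref{defil1}) we have $[w]R'[v]$, hence $M',[v]\models\phi$, and by the induction hypothesis $M,v\models\phi$. Since $v$ was arbitrary, $M,w\models\Box\phi$. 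The case $\psi = \D\phi$ is entirely analogous, using the ``$R_D$ and $\D$'' parts of clauses (\ref{defil1}) and (\ref{defil3}).

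\textbf{Main point to watch.} There is no real obstacle here beyond bookkeeping: the definition of filtration has been set up precisely so that clause (\ref{defil1}) handles the ``easy'' inclusion of relations (from $M$ to $M'$) and clause (\ref{defil3}) handles the ``hard'' inclusion (enough arrows survive in $M'$ to force truth of boxed formulas back down). The only thing one must be careful about is that the induction is legitimate, which it is exactly because $\Psi$ is closed under subformulas; and that in the Kripke semantics on frames (as opposed to topological models) the clause for $\Box$ reads $M,w\models\Box\phi$ iff $M,v\models\phi$ for all $v$ with $wRv$ — so the argument above is literally the standard one. I would present the $\Box$ case in full and simply remark that $\D$ is treated the same way with $R_D$ in place of $R$.
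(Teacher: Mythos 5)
Your proof is correct and is exactly the standard induction-on-subformulas argument; the paper itself omits any proof of this lemma, simply citing it as well known from \cite{Chagrov} and \cite{BRV}, and what you have written is precisely the argument those references give. Your closing remark is also the right one to make explicit: in the Kripke models at issue here $\D$ is evaluated via the relation $R_D$ of the frame (not via literal inequality of points), so the $\D$ case really is word-for-word the $\Box$ case with $R_D$ and the corresponding parts of clauses (2) and (3) of the filtration definition.
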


\begin{lemma}
\label{filtr} Let $F_1$ be an {\bf S4D}-frame, $M_1=(F_1,
\theta_1)$ a model, $\Psi$ a finite set of formulas closed under
subformulas. Then there exists a filtration $M_2$ of $M_1$ through
$\Psi$, such that $M_2 = (F_2,\theta_2)$ and $F_2$ is an {\bf
S4D}-frame.
\end{lemma}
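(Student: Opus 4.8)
The plan is to construct the filtration explicitly, choosing the relations on the quotient $W_\Psi$ carefully so that the resulting frame $F_2$ satisfies all five constraints (reflexivity and transitivity of $R$, symmetry and pseudo-transitivity of $R_D$, and $R\subseteq R_D\cup Id$). The standard move is to take the \emph{transitive reflexive closure} for the $\Box$-relation and a suitable closure for the $\D$-relation; the subtlety is that the two relations are linked by $(D_\Box)$, so the two closures cannot be chosen independently. First I would define the minimal filtration relations $R_0$ and $R_{D,0}$ by the clause in item~\ref{defil3} (namely $[w]R_0[v]$ iff for every $\Box\phi\in\Psi$, $w\models\Box\phi$ implies $v\models\phi$, and analogously $[w]R_{D,0}[v]$ using $\D$), and then I would set
$$
R_2 = R_0^* \cup Id_{W_\Psi}, \qquad R_{D,2} = \bigl(R_{D,0}\cup R_{D,0}^{-1}\bigr)^* \cup R_2 .
$$
Adding $R_2$ into $R_{D,2}$ is exactly what forces $(D_\Box)$ to survive; symmetrizing first handles $(B_D)$, and taking the transitive closure handles $(4^-_D)$ and $(4_\Box)$; reflexivity of $R_2$ is built in and $R$-reflexivity already holds on the original model since $F_1$ is an $\mathbf{S4D}$-frame.

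The key steps, in order, are: (1) verify that $R_2$ and $R_{D,2}$ satisfy the two filtration conditions~\ref{defil1} and~\ref{defil3} — condition~\ref{defil1} is immediate because $R\subseteq R_0\subseteq R_2$ and $R_D\subseteq R_{D,0}\subseteq R_{D,2}$ (using that $F_1$ is an $\mathbf{S4D}$-frame for the $R_D$ part), while condition~\ref{defil3} for $R_2$ follows from the observation that $R_0$ already satisfies it and, because $\Box\phi\in\Psi$ forces $\Box\Box\phi$-behaviour via the $\mathbf{S4}$ axioms present in $M_1$, the property is preserved under composition and under adding $Id$; (2) the analogous check for $R_{D,2}$, where the $\D$-clause must be shown to survive symmetrization (using $(B_D)$ valid in $M_1$), transitive closure (using $(4^-_D)$), and the union with $R_2$ (using $(D_\Box)$ valid in $M_1$, which gives $w R v \Rightarrow w\models\D\phi\text{ implies }v\models\phi$ whenever $[w]\ne[v]$, the equal case being trivial); (3) read off from the definitions that $F_2=(W_\Psi,R_2,R_{D,2})$ is an $\mathbf{S4D}$-frame, i.e.\ all five structural constraints hold by construction; (4) conclude that $M_2=(F_2,\theta_2)$ with $\theta_2$ given by item~\ref{defil2} is a filtration, so the Filtration Lemma applies.

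The main obstacle is step~(2), specifically verifying condition~\ref{defil3} for $R_{D,2}$ after closing under the three operations. The delicate point is the interaction in $R_{D,2}=(R_{D,0}\cup R_{D,0}^{-1})^*\cup R_2$: along a $R_{D,0}^{-1}$-edge one needs $w\models\D\phi\Rightarrow v\models\phi$ to follow from $v\models\D\phi\Rightarrow w\models\phi$, which is where symmetry $(B_D)$ enters — but one must be careful that $\DE$ and $\D$ mesh correctly on equivalence classes, not just on points. Likewise, closing $R_{D,0}$ under transitivity requires that $\D\phi\in\Psi$ propagate the right way, and since $\Psi$ need not contain $\D\D\phi$, one leans on $(4^-_D)$ holding in the original model $M_1$ to get $w\models\D\phi \Rightarrow$ ($w\models\phi$ or $w\models\D\D\phi$), hence the needed implication two steps out, the ``$w\models\phi$'' disjunct being absorbed by the $Id$ part of $R_2$. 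Assembling these three verifications into a single clean induction on path length in $R_{D,2}$ is the technical heart of the argument; everything else is bookkeeping.
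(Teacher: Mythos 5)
There is a genuine gap, and it sits exactly at the point you wave through as ``the equal case being trivial''. Because you force $R_2\subseteq R_{D,2}$ and $Id_{W_\Psi}\subseteq R_2$, every class becomes $R_{D,2}$-reflexive. Filtration condition (3) for $\D$ then demands: if $[w]R_{D,2}[w]$ then $w\models\D\phi$ only if $w\models\phi$, for every $\D\phi\in\Psi$. This fails at any point with $w\models\D\phi\wedge\neg\phi$ (i.e.\ $\phi$ true everywhere except at $w$), and such points are unavoidable --- they are the whole reason the difference modality is non-trivial, and they occur in the generated submodels of the canonical model to which the lemma is applied in Theorem \ref{FinitApproc}. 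So your $M_2$ is not a filtration and the Filtration Lemma cannot be invoked. The paper avoids this precisely by \emph{not} creating new reflexive loops: it takes the \emph{pseudo-transitive} closure $R_{D2}=R_{D}'^{*}-(Id-R_{D}')$ of the minimal filtration, so classes that were $R_D'$-irreflexive stay irreflexive (and condition (3) at a reflexive class $[v]$ reduces to $[v]R_D'[v]$, where $R_D'$ is already a filtration relation). The constraint $R_2\subseteq R_{D2}\cup Id$ then comes for free from $R_1\subseteq R_{D1}\cup Id_{W_1}$ via the minimal filtration; it does not need to be imposed by adding $R_2$ into $R_{D,2}$.

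A second, smaller problem: the relation you call the minimal filtration --- defined by the clause of condition (3), ``$[w]R_0[v]$ iff $w\models\Box\phi$ implies $v\models\phi$ for all $\Box\phi\in\Psi$'' --- is the \emph{maximal} filtration, and its transitive closure need not satisfy condition (3): from $[w]R_0[v]R_0[u]$ and $w\models\Box\phi$ you only obtain $v\models\phi$, not $v\models\Box\phi$, and $\Box\Box\phi$ need not belong to $\Psi$, so the $\mathbf{S4}$ axioms do not rescue the composition step. The paper instead uses the genuinely minimal filtration ($[w]R'[v]$ iff $w'R_1v'$ for some representatives $w'\approx_\Psi w$, $v'\approx_\Psi v$), for which transitivity of $R_1$ does push $\Box\phi$ along the chain of representatives; the analogous chain argument, combined with the pseudo-transitive closure, is what handles $R_{D2}$.
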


\begin{proof} Let $M'=(W_\Psi, R', R_D', \theta ')$ be the minimal
filtration of $M$. The minimal filtration is well-known (cf.
\cite{Chagrov} or \cite{McKinsey}). Briefly, $[x]R'[y]$ iff
there exist $x'\in [x]$ and $y'\in [y]$ such that $x'R_1 y'$, and
the same for $R'_D$ and $R_{D1}$.

Let $R_2$ be the transitive closure of $R'$:
$$
R_2 = R'^*= \bigcup_{n\ge 1} R'^n;
$$
and let $R_{D2}$ be the pseudo-transitive closure of $R_{D}'$:
$$
R_{D2} = R_{D}'^* - (Id - R_{D}').
$$
Note that the only difference between the pseudo-transitive and
the transitive closure is that the irreflexive points remain
irreflexive.

One can easily see that the reflexivity of $R'$ is inherited by
$R_2$, and the reflexivity of $R'$ follows from the reflexivity of
$R_1$. In the same way the symmetry of $R_{D1}$ implies the
symmetry of $R_{D2}$. The transitivity of $R_2$ and the
pseudo-transitivity of $R_{D2}$ are provided by construction.
Next, we can easily show that $R'\subseteq R_{D}'\cup Id$; hence
$R_2\subseteq R_{D2}\cup Id$ holds.

To complete the proof, we have to show that the relations $R_2$
and $R_{D2}$ satisfy the definition of filtration. Since
Filtration Lemma for the minimal filtration and its transitive
closure in transitive logics are well-known (cf. \cite{BRV}),
we will only check $R_{D2}$.
\begin{enumerate}
\item For arbitrary $w, v \in W_1$ assume $v R_{D1} w$, let us
prove that $[v] R_{D2}[w]$. If $[v]\ne [w]$, the proof is the same
as for the transitive closure. So assume $[w]=[v]$; then
$[v]R'_{D}[w]$, and so $[v]R_{D2} [w]$.
\item Assume $[v] R_{D2} [w]$, let us prove that for all $\D \psi \in \Psi$;
$M_1,v\models\D \psi$ only if $M_1,w\models \psi$. If $[v]\ne
[w]$, then the proof is the same as for the transitive closure. If
$[w]=[v]$, then from $[v]R_{D2}[v]$ follows $[v]R'_{D}[v]$. But
$R'_D$ was already filtration.
\end{enumerate}

So we obtain a filtration that reduces $M_1$ to a finite model
over an $\mathbf{S4D}$-frame. \end{proof}

\begin{theorem}
\label{FinitApproc} Let $L$ be one of the logics:
$\mathbf{S4D}$, $\mathbf{S4DS}$, $\mathbf{S4DT_1S}$. Then $L$ has
the finite model property.
\end{theorem}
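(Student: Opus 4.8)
The plan is to derive the finite model property for each of the three logics from the Kripke completeness already obtained via the Sahlqvist theorem, together with the filtration construction of Lemma \ref{filtr}. Since each of the three logics is Kripke complete with respect to its class of frames, it suffices to show: whenever a formula $\phi$ is refuted on some $L$-frame, it is refuted on a \emph{finite} $L$-frame. So fix $L \in \{\mathbf{S4D}, \mathbf{S4DS}, \mathbf{S4DT_1S}\}$ and suppose $\phi \notin L$; by completeness there is an $L$-frame $F_1$, a model $M_1 = (F_1, \theta_1)$ and a point refuting $\phi$. Let $\Psi$ be the (finite) set of all subformulas of $\phi$, closed under subformulas.

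For $L = \mathbf{S4D}$ the result is immediate: Lemma \ref{filtr} produces a filtration $M_2 = (F_2, \theta_2)$ of $M_1$ through $\Psi$ with $F_2$ a finite $\mathbf{S4D}$-frame, and by the Filtration Lemma $\phi$ is still refuted in $M_2$; since $\Psi$ is finite, $W_\Psi$ is finite (at most $2^{|\Psi|}$ classes), so $F_2$ is a finite $\mathbf{S4D}$-frame refuting $\phi$. For $L = \mathbf{S4DS}$ and $L = \mathbf{S4DT_1S}$ one further has to check that the filtration $F_2$ constructed in Lemma \ref{filtr} inherits the extra frame conditions — being a $DS$-frame and/or a $T_1$-frame — from $F_1$. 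This is the step I expect to require the only real work, so I would carry it out as two short lemmas inserted before the theorem (or inline in the proof):

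First, if $F_1$ is a $DS$-frame then $F_2$ is a $DS$-frame. Take an $R_{D2}$-irreflexive class $[x]$; by the pseudo-transitive-closure construction $R_{D2} = R_D'^{*} - (Id - R_D')$, irreflexivity of $[x]$ in $R_{D2}$ forces $\neg([x] R_D' [x])$, and then by definition of the minimal filtration no representative $x' \in [x]$ has $x' R_{D1} x'$. Pick any $x' \in [x]$: it is $R_{D1}$-irreflexive in $F_1$, hence (by the $DS$-property of $F_1$) has an $R_1$-successor $y'$, and then $[x'] = [x]$ has the $R_2$-successor $[y']$ since $R' \subseteq R_2$ and $x' R_1 y'$ gives $[x'] R' [y']$. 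Thus every $R_{D2}$-irreflexive class has a successor. Second, if $F_1$ is a $T_1$-frame then $F_2$ is a $T_1$-frame. Let $[x]$ be $R_{D2}$-irreflexive; as above every representative $x'$ is $R_{D1}$-irreflexive, hence $R_1$-minimal in $F_1$. Suppose $[y] R_2 [x]$; I must show $[y] = [x]$. Since $R_2 = R'^{*}$, there is an $R'$-chain $[y] = [z_0] R' [z_1] R' \cdots R' [z_n] = [x]$. By the definition of minimal filtration each step $[z_i] R' [z_{i+1}]$ is witnessed by $R_1$-related representatives; using $R$-minimality of the representatives of $[x]$ one works backwards along the chain to conclude each $[z_i]$ coincides with $[x]$, hence $[y] = [x]$. (The details here are the place to be careful: one should check that the witnessing representatives can be chosen coherently, which is straightforward because $R_1$ is transitive, so a single representative-level witness $y'' R_1 x''$ with $y'' \in [y]$, $x'' \in [x]$ suffices, and then $R_1$-minimality of $x''$ gives $y'' = x''$, whence $[y] = [x]$.)

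Combining these observations: for each of the three logics $L$, the filtration $M_2$ from Lemma \ref{filtr} is finite, refutes $\phi$, and lies on an $L$-frame; hence $\phi$ is refuted on some finite $L$-frame, and since $L$ is sound with respect to its finite frames, $\phi \notin L$ was witnessed finitely. Therefore each of $\mathbf{S4D}$, $\mathbf{S4DS}$, $\mathbf{S4DT_1S}$ has the finite model property. The main obstacle, as noted, is the preservation of the $T_1$-property under the pseudo-transitive closure of the minimal filtration; everything else is bookkeeping already packaged in Lemma \ref{filtr} and the Filtration Lemma.
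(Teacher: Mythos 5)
Your overall strategy coincides with the paper's: refute $\phi$ on an $L$-frame, apply Lemma \ref{filtr}, and then check that the $DS$- and $T_1$-properties survive the filtration. The $DS$ half is essentially right, but you should add one line explaining why the successor class $[y']$ is distinct from $[x]$ (a ``successor'' must be a proper one, since $R$ is reflexive and every point trivially $R$-sees itself): if $[y']=[x']$, then $x'R_1y'$ with $y'\ne x'$ and $R_1\subseteq R_{D1}\cup Id$ would give $[x]R_D'[x]$ and hence $[x]R_{D2}[x]$, contradicting the assumed irreflexivity.

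The one genuinely wrong step is in the $T_1$ half: the parenthetical claim that, because $R_1$ is transitive, the chain $[y]=[z_0]R'[z_1]\cdots R'[z_n]=[x]$ collapses to a single representative-level witness $y''R_1x''$ with $y''\in[y]$ and $x''\in[x]$. The minimal filtration of a transitive relation need not be transitive --- that is exactly why Lemma \ref{filtr} passes to the transitive closure --- and the witnesses of consecutive steps land in the same equivalence class but need not be the same point of $W_1$, so transitivity of $R_1$ cannot be used to compose them. Fortunately the backwards induction you also sketch is sound and needs no such collapse: since $\neg\,[x]R_D'[x]$, \emph{every} representative of $[x]$ is $R_{D1}$-irreflexive, hence $R_1$-minimal by the $T_1$-property of $F_1$; so if $[z_i]R'[z_{i+1}]$ and $[z_{i+1}]=[x]$, any witness pair $uR_1v$ with $v\in[x]$ forces $u=v\in[x]$, whence $[z_i]=[x]$, and inductively $[y]=[x]$. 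Replace the transitivity remark by this induction and the proof goes through. For comparison, the paper writes out the same chain, locates the \emph{last} index $l$ at which the chain leaves the class $\eta=[x]$, and uses the fact that $R_{D1}\cup Id$ is universal (available because it starts from a point-generated submodel of the canonical model) to force the witness $z_l$ to equal $x$ literally before invoking $R_1$-minimality; your version avoids appealing to universality, which is a small simplification.
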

\begin{proof} Assume that $A$ is a formula such that
$A$ is not in $L$. Hence, $A$ is refuted in some generated
submodel $M_1=(W_1,R_1,R_{D1},\theta)$ of the canonical model of
logic $L$. Note that since $M_1$ is a generated submodel,
$R_{D1}\cup Id_{W_1}$ is the universal relation.

Let $\Psi$ be the set of all subformulas of formula $A$. By Lemma
\ref{filtr} there exists model $M_2=(F_2,\theta_2)$ such that
$F_2$ is a $\mathbf{S4D}$-frame and $M_2$ is a filtration of $M_1$
through $\Psi$.

Since $M_2$ is a filtration, $A$ is refuted in $M_2$. So it
remains to prove (if needed) the $T_1$--property and the
$DS$-property for $F_2$.

Let us prove that axiom $AT_1$ is valid in frame $F_2$. By Lemma
\ref{PropF}, it is sufficient to prove that for any $\eta$ such
that $ \neg \eta R_{D2}\eta$ there does not exist $\psi$ such that
$\psi\ne \eta$ and $\psi R_2\eta$.

Assume the contrary, i.e. there exists a point $\psi\ne \eta$ such
that $\psi R_2\eta$. Then consider their inverse images: $ [x] =
\eta ~\&~ [y] = \psi$. By construction of $R_2$ we obtain
$$
y\approx_\Psi y_0 R_1 z_0 \approx_\Psi y_1 \ldots y_k R_1 z_k
\approx_\Psi x
$$

Since $y\not\approx_\Psi x$, we can take maximal $l$ such that
$y_l\not\approx_\Psi z_l$. By transitivity of $\approx_\Psi$ we
conclude that $z_l \approx_\Psi x$ and $[z_l]=[x]=\eta$. Assume
that $z_l\ne x$, since $R_{D1}\cup Id$ is the universal relation
$z_l R_{D1} x$, hence $[z_l] R_{D2} [x]$; which contradicts $\lnot
\eta R_{D2} \eta$.

So $y_l \ne x~ \&~ y_l R_1 x(=z_l)$, at the same time reflexivity
is preserved under filtration; hence $\neg x R_{D1} x$. So we came
to a contradiction, because the generated subframe
$F_1=(W_1,R_1,R_{D1})$ of the canonical model has the
$T_1$--property.

Now let $[x]$ be an $R_{D2}$-irreflexive point. Hence, $x$ is also
an $R_{D1}$-irreflexive point, so for some $y$, $x R_1 y$ and $x
R_{D1} y$, then $[x]R_2 [y]$ and $[x]R_{D2} [y]$, so $[y]\ne [x]$;
hence $[x]$ is not maximal. \end{proof}

\section{Topological completeness.}
Let us define analogue of $p$-morphism for maps from topological
space onto finite $\mathbf{S4D}$-frame.

\begin{definition} Let $\X$ be a topological space and let
$F=(W,R,R_D)$ be a finite Kripke frame. A function $f:\X\to F$
is called a \emph{cd-$p$-morphism}, if it is surjective and
satisfies the following two conditions
\begin{equation}
\label{RCf} \Cf f^{-1}(w)=f^{-1}(R^{-1}(w)),
\end{equation}
\begin{equation}
\label{RDf} R_D^{-1}(f^{-1}(w))=f^{-1}(R_D^{-1}(w)),
\end{equation}
where $R_D= ``\ne"$ in $\X$ (in particular $R_D^{-1}(\{x\}) =
\X-\{x\}$). In notation $f:\X\cpmor F$.
\end{definition}

Note that since $f$ is surjective, (\ref{RDf}) is equivalent to
the following: if $w$ is $R_D$-irreflexive then $f^{-1}(w)$ is
one-element.

\begin{lemma}
\label{cdpmorph} If $F$ is a finite Kripke frame, $\X$ is a
topological space and $f: \X \cpmor F$ then
$L_D(\X)\subseteq L(F)$.
\end{lemma}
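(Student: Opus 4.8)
# Proof Proposal

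The plan is to show that if $\X \models A$ for every formula $A$ in $L_D(\X)$ — equivalently, if $A \notin L(F)$ — then $A \notin L_D(\X)$, by transporting a refuting Kripke valuation on $F$ back to a topological valuation on $\X$ along $f$. So suppose $A \notin L(F)$; then there is a valuation $\theta_F$ on $F$ and a point $w_0 \in W$ with $(F,\theta_F), w_0 \not\models A$. Define a valuation $\theta$ on $\X$ by $\theta(p) = f^{-1}(\theta_F(p))$. The key claim, to be proved by induction on the structure of formulas, is the \emph{truth-preservation} statement
\begin{equation}
\label{truthpres}
\X, \theta, x \models \psi \iff F, \theta_F, f(x) \models \psi \quad\text{for all } x \in \X \text{ and all formulas } \psi.
\end{equation}
Granting \eqref{truthpres}, pick any $x_0 \in f^{-1}(w_0)$ (nonempty since $f$ is surjective); then $\X, \theta, x_0 \not\models A$, so $A \notin L_D(\X)$, which is the contrapositive of what we want.

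The main work is the induction in \eqref{truthpres}. The atomic case is immediate from the definition of $\theta$, and the Boolean cases ($\bot$, $\to$) are routine since $f$ is a function, so truth of a formula at $x$ depends only on the truth values of subformulas at $x$, hence only on $f(x)$. The two modal cases are where the cd-$p$-morphism conditions are used. For $\Box$: one rewrites the semantic clause in operator form, $\X, \theta, x \models \Box\psi$ iff $x \in \I(\text{set where }\psi\text{ holds})$, and $F, \theta_F, w \models \Box\psi$ iff $w \notin R^{-1}(\text{set where }\neg\psi\text{ holds})$ (using reflexivity/transitivity of $R$). By the induction hypothesis the set where $\psi$ holds in $\X$ equals $f^{-1}$ of the set where $\psi$ holds in $F$; then condition \eqref{RCf}, rewritten via complements as $\I f^{-1}(V) = f^{-1}(W - R^{-1}(W - V))$ for $V \subseteq W$, gives exactly the needed equivalence. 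For $\D$: $\X, \theta, x \models \D\psi$ iff $x \notin R_D^{-1}(\text{set where }\neg\psi\text{ holds})$ where $R_D$ is genuine inequality on $\X$, and $F, \theta_F, w \models \D\psi$ iff $w \notin R_D^{-1}(\text{set where }\neg\psi\text{ holds in }F)$; here condition \eqref{RDf} applied to the set $f^{-1}(U)$ (with $U$ the set where $\neg\psi$ holds in $F$, using the induction hypothesis) yields the equivalence directly.

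The step I expect to be the main obstacle is the $\Box$-case — specifically, getting the interior/closure bookkeeping exactly right. One must carefully translate \eqref{RCf} (stated for closure and $R^{-1}$) into the statement one actually needs about $\I f^{-1}(V)$, being attentive to the fact that $R$ is reflexive and transitive, so that $R^{-1}(R^{-1}(V)) = R^{-1}(V)$ and the topology $\{R(V) : V \subseteq W\}$ behaves as claimed (in particular $\I$ on $F$ is $w \mapsto$ ``$w$ and all its $R$-predecessors'' appropriately dualized). Once the identity $\I f^{-1}(V) = f^{-1}(\I_F V)$ is extracted from \eqref{RCf}, where $\I_F$ denotes the interior operator of $Top(F)$, the induction closes cleanly; I would isolate this identity as the crux and verify it first, then let the rest of the induction fall out mechanically.
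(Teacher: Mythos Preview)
Your approach is essentially the paper's: pull back a refuting valuation along $f$ and prove $\Theta(\phi)=f^{-1}(\theta(\phi))$ by induction on $\phi$. The paper carries out the modal cases via $\Diamond$ and $\DE$ rather than $\Box$ and $\D$, which avoids the complement bookkeeping you flag, but this is cosmetic.

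There is one genuine point you miss, and it is not the one you identify as the obstacle. Condition~\eqref{RCf} is stated only for singletons $w\in W$; to get the identity you need, namely $\Cf f^{-1}(U)=f^{-1}(R^{-1}(U))$ for an arbitrary $U\subseteq W$ (equivalently your $\I f^{-1}(V)=f^{-1}(\I_F V)$), you must write $f^{-1}(U)=\bigcup_{w\in U}f^{-1}(w)$ and then push $\Cf$ through the union. Closure distributes over \emph{finite} unions only, so this step uses the hypothesis that $F$ is finite in an essential way --- the paper makes this explicit and footnotes it. Reflexivity and transitivity of $R$ play no role in this extension; they only ensure that $Top(F)$ is a genuine topology, which you do not actually need for the induction to go through. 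So your crux is correctly located at the $\Box$-case, but the reason is finiteness of $W$, not properties of $R$.
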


\begin{proof} Note that $f$ is $cd-p-$morphism and $\Cf$ distributes
over finite\footnote{$\Cf$ is not distributes over infinite
unions so finiteness of $F$ is essential.} unions. So for
$U\subseteq W$ we have

\begin{equation}
\label{Ufinite}
\begin{array}{l}
f^{-1}(R^{-1}(U)) = f^{-1} (\bigcup\limits_{w\in U} R^{-1}(w)) =
\bigcup\limits_{w\in U} f^{-1}(R^{-1}(w)) \stackrel{(\ref{RCf})}=\\
\qquad\qquad\qquad = \bigcup\limits_{w\in U} \Cf f^{-1}(w) =
\Cf f^{-1}(U)
\end{array}
\end{equation}

In other terms, $f$ is an interior map between topological spaces
$\X$ and $Top(F)$.

Similarly

\begin{equation}
\label{UfiniteD}
\begin{array}{l}
f^{-1}(R_D^{-1}(U)) = f^{-1} (\bigcup\limits_{w\in U} R_D^{-1}(w))
=
\bigcup\limits_{w\in U} f^{-1}(R_D^{-1}(w)) \stackrel{(\ref{RDf})}=\\
\qquad\qquad\qquad = \bigcup\limits_{w\in U} R_D^{-1} f^{-1}(w) =
R_D^{-1} f^{-1}(U).
\end{array}
\end{equation}

Now let $\theta$ be an arbitrary valuation on the frame $F$. Take
a valuation $\Theta$ on $\X$ such that $\Theta(p) =
f^{-1}(\theta(p))$. Then a standard inductive argument shows that
for any formula $\phi$
\begin{equation}\label{eqvaluations}
\Theta(\phi)=f^{-1}(\theta(\phi)),
\end{equation}
where $\theta(\phi)= \left\{v\left| (F,\theta),v\models \phi
\right.\right\}$ and $\Theta(\phi)= \left\{x\left|
(\X,\Theta),x\models \phi \right.\right\}$.

For this proof we rewrite all formulas using $\romb$ and
$\DE$ (rather then $\Box$ or $\D$).

There are only two nontrivial cases:

i) $\phi \equiv \romb \psi$. Then
$$
f^{-1}(\theta(\romb\psi)) =
f^{-1}(R^{-1}(\theta(\psi)))\stackrel{(\ref{Ufinite})}= \Cf
f^{-1}(\theta(\psi)) \stackrel{induction}= \Cf\Theta(\psi) =
\Theta(\romb \psi).
$$

ii) $\phi\equiv \DE \psi$. Then
$$
f^{-1}(\theta(\DE\psi)) = f^{-1}(R_{D}^{-1}(\theta(\psi)))
\stackrel{(\ref{UfiniteD})}= R_{D}^{-1} f^{-1}(\theta(\psi))
\stackrel{induction}= R_{D}^{-1}\Theta(\psi) = \Theta(\DE
\psi).
$$

Now if $\phi \not\in \mathbf{L}(F)$, there exists a valuation
$\theta$ such that $\theta (\phi) \ne W$. By
(\ref{eqvaluations}) $\Theta(\phi)=f^{-1}(\theta(\phi))$, and
so $\Theta (\phi)\ne \X$ since $f$ is subjective. Thus $\phi
\not\in \mathbf{L}(\X)$.
\end{proof}

The following proposition uses ideas from
\cite{Rijke,Esakia}

\begin{proposition}
Let $F=(W,R,R_D)$ be a $\mathbf{S4D}$-Kripke frame, $R_D \cup Id_W
= W\times W$ . There exists $\mathbf{S4D}$-Kripke frame
$F'=(W',R',R_D')$, such that $F'\twoheadrightarrow F$ and $x'R_D'
y'$ iff $x'\ne y'$.
\end{proposition}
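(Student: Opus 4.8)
The plan is to blow up each reflexive point of $R_D$ into a cluster of two copies so that the difference relation becomes genuine inequality, while keeping the topological structure of $R$ intact so that the $p$-morphism conditions hold. Concretely, I would set $W' = W \times \{0,1\}$ with projection $f(x,i) = x$. For the relations, put $(x,i)\,R'\,(y,j)$ iff $xRy$ (no constraint on the coordinates), so that cones in $R$ are faithfully doubled; and define $R_D'$ as follows: $(x,i)\,R_D'\,(y,j)$ iff $x R_D y$, \emph{or} $x = y$ and $i \ne j$. In other words we keep every genuine $R_D$-edge, we connect the two copies of each point to each other, and we connect the two copies of $x$ to the two copies of $y$ whenever $x R_D y$. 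One checks immediately that $R_D'$ is irreflexive and that $(x,i) \ne (y,j)$ implies $(x,i)\,R_D'\,(y,j)$: indeed if $x \ne y$ then either $x R_D y$ (which holds since $R_D \cup Id_W = W\times W$ forces $x R_D y$ whenever $x\ne y$), and if $x = y$ then $i \ne j$ so the second clause applies. Hence $R_D'$ is exactly the inequality relation on $W'$, as required.

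Next I would verify that $F'$ is an $\mathbf{S4D}$-frame. Reflexivity and transitivity of $R'$ are inherited from $R$ since $R'$ is literally the pullback $\{((x,i),(y,j)) : xRy\}$. Symmetry of $R_D'$ follows from symmetry of $R_D$ together with the symmetry of the ``$i\ne j$'' clause. Pseudo-transitivity: one shows $R_D'^{+}$ is transitive, which amounts to checking $R_D' \circ R_D' \subseteq R_D' \cup Id_{W'}$; this is a short case analysis using pseudo-transitivity of $R_D$ and the fact that the two copies of a point see each other and see exactly the same $R_D$-targets. Finally $R' \subseteq R_D' \cup Id_{W'}$: if $(x,i)\,R'\,(y,j)$ then $xRy$, so $xR_Dy$ or $x=y$ by $(D_\Box)$ for $F$; in the first case $(x,i)\,R_D'\,(y,j)$, and in the second case either $i=j$ (so we are in $Id_{W'}$) or $i\ne j$ (so the second clause of $R_D'$ applies).

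It remains to check that $f : F' \twoheadrightarrow F$ is a $p$-morphism. Surjectivity is clear. The forward (monotonicity) condition for $R$ is immediate from the definition $(x,i)\,R'\,(y,j) \Rightarrow xRy$, and for $R_D$ it follows because $(x,i)\,R_D'\,(y,j)$ gives either $x R_D y$ directly or $x=y$, and in the latter case we need $x R_D x$ — which holds precisely when $x$ is $R_D$-reflexive, and note that the ``$i\ne j$'' clause of $R_D'$ is only ever invoked between the two copies of a single point, but this is exactly the situation where we must already have $x R_D x$ unless $x$ is irreflexive; here the key point is that we should only double the \emph{$R_D$-reflexive} points, leaving irreflexive points as a single copy. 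The back (lifting) condition: given $f(x,i) = x$ and $x R y$ in $F$, we need some $(y,j)$ with $(x,i)\,R'\,(y,j)$, and $(y,0)$ works; similarly for $R_D$, given $x R_D y$ we take $(y,0)$, and by definition $(x,i)\,R_D'\,(y,0)$.

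The main obstacle — and the reason the construction must be stated carefully — is the interaction between the "doubling" and the $R_D$-irreflexive points: if an $R_D$-irreflexive point $x$ is doubled, then its two copies become $R_D'$-related to each other, so $f$ would map an $R_D'$-edge to a non-edge $\neg(x R_D x)$, breaking monotonicity. The fix is to let $W'$ be the disjoint union of $\{x\}$ for each $R_D$-irreflexive $x$ and $\{x\}\times\{0,1\}$ for each $R_D$-reflexive $x$, with $R'$ and $R_D'$ defined by the analogous clauses; then every $R_D'$-edge projects to a genuine $R_D$-edge. I would therefore carry out the construction in this refined form and then run the four $\mathbf{S4D}$-frame checks and the three $p$-morphism checks exactly as sketched above, the case analyses all being routine once the two kinds of points are kept separate.
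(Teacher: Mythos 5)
Your proposal is correct and, in its final refined form, is exactly the paper's construction: double only the $R_D$-reflexive points (keeping irreflexive points as single copies), pull $R$ back along the projection, and let $R_D'$ be genuine inequality. You correctly identify the one subtlety — that doubling an $R_D$-irreflexive point would break monotonicity of $f$ with respect to $R_D$ — and your fix and the sketched $p$-morphism checks match the paper's argument.
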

\begin{proof}
Let us put $W^0 = \{x\in W| x R_D x\}$ and $W^\times = W-W^0$.
Then
\begin{equation} \label{WWW}
W' = W^\times \cup W^0 \times \{0,1\}
\end{equation}
Let us define the function $f:F'\to F$ such that
$$
f(x')=\left\{
\begin{array}{ll}
x, &\mbox{if } x'=x\in W^\times;\\
x, &\mbox{if } x'=(x,i);
\end{array}\right.
$$
and the relation $R'$:
$$
x'R'y' \iff f(x')R f(y')
$$

Let us prove that $f$ is a $p$-morphism.
\begin{enumerate}
  \item Obviously $f$ is surjective.
  \item Assume that $x' R' y'$ then by definition of $R'$ $f(x')
  R f(y')$. Assume that $x' R_D' y'$ (or $x'\ne y'$), $f(x')=x$
  and $f(y')=y$. If $x\ne y$ then $x R_D y$. If $x=y$ then
  $x'=(x,0)$ and $y'=(x,1)$ (or vice verse); using (\ref{WWW})
  we conclude that $xR_D x=y$.

  \item Assume that $f(x') R y$. If $y \in W^0$ then
  $y'=(y,0)$ or $y'=y$ otherwise. Easy to see that $f(y')=y$ and
  $x'R'y'$. Assume that $f(x') R_D y$. Case when $f(x') \ne y$ is
  obvious so let $f(x') = y$. It means that $y\in W^0$ and
  $x'=(y,i)$. So we put $y'=(y, (i+1) \;\mathrm{mod}\; 2)$ and this will do.
\end{enumerate}
\end{proof}

\begin{corollary}\label{cor1}
Let $\mathcal{C}$ be the class $\mathbf{S4D}$-frames of the form
$F=(W,R,\ne)$ then $\mathbf{S4D}$ is complete with respect to
$\mathcal C$.

\end{corollary}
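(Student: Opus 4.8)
The plan is to derive Corollary~\ref{cor1} by combining the finite model property of $\mathbf{S4D}$ (Theorem~\ref{FinitApproc}) with the $p$-morphism construction of the preceding Proposition and then transferring validity to topological spaces via the $p$-morphism Lemma. First I would observe that $\mathbf{S4D}\subseteq L(\mathcal C)$ is trivial, since every frame in $\mathcal C$ is in particular an $\mathbf{S4D}$-frame; so the content is the converse inclusion $L(\mathcal C)\subseteq\mathbf{S4D}$. Suppose $A\notin\mathbf{S4D}$. By Theorem~\ref{FinitApproc}, $A$ is refuted in a finite $\mathbf{S4D}$-frame $F_2$, and by Lemma~\ref{LemGen} we may pass to a generated (rooted) subframe, so that we may assume $R_D\cup Id_W=W\times W$.

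Next I would apply the Proposition to this finite frame $F=(W,R,R_D)$ to obtain an $\mathbf{S4D}$-frame $F'=(W',R',R_D')$ with $F'\twoheadrightarrow F$ and $R_D'$ equal to genuine inequality $\ne$ on $W'$. Since $W'$ is a finite disjoint union of $W^\times$ and two copies of $W^0$, it is again finite, so $F'\in\mathcal C$. By the $p$-morphism Lemma, $L(F')\subseteq L(F)$, and since $A\notin L(F)$ we get $A\notin L(F')$. Hence $A\notin L(\mathcal C)$. This establishes $L(\mathcal C)\subseteq\mathbf{S4D}$, and together with the trivial inclusion gives $\mathbf{S4D}=L(\mathcal C)$, which is exactly completeness of $\mathbf{S4D}$ with respect to $\mathcal C$.

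The only genuinely delicate point is ensuring that the frame $F'$ produced by the Proposition is still \emph{finite} and still a legitimate member of $\mathcal C$: finiteness is immediate from the explicit description $W'=W^\times\cup(W^0\times\{0,1\})$, and membership in $\mathcal C$ follows because the Proposition guarantees $F'$ is an $\mathbf{S4D}$-frame with $R_D'={}\ne$. I should also note that, strictly speaking, Corollary~\ref{cor1} as stated only asserts Kripke completeness with respect to $\mathcal C$, not topological completeness; if one wants the topological reading (which motivates the definition of $Top(F)$ and Lemma~\ref{cdpmorph}), one additionally remarks that for a frame $F=(W,R,\ne)\in\mathcal C$ the identity map $W\to Top(F)$ is a cd-$p$-morphism, so $L_D(Top(F))\subseteq L(F)$, which would feed into the proof of Theorem~\ref{theorS4D}. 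But for the Corollary itself, the three ingredients — f.m.p., the Proposition, and the $p$-morphism Lemma — suffice, and I expect no obstacle beyond bookkeeping.
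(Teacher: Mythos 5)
Your proposal is correct and follows essentially the same route the paper intends: Kripke completeness (you invoke the f.m.p.\ of Theorem~\ref{FinitApproc}, the paper's Sahlqvist remark would do equally well), passage to a rooted subframe so that $R_D\cup Id_W=W\times W$, the preceding Proposition to replace $R_D$ by genuine inequality, and the $p$-morphism Lemma to transfer the refutation. Your closing remark correctly separates the Kripke-completeness content of the Corollary from the topological step, which the paper handles afterwards via Lemma~\ref{LemFTop}.
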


It is easy to show that for any $\mathbf{S4D}$-frame $F=(W,R,\ne)$
$$
Top(F)\cpmor F
$$
but we can prove a stronger statement:

\begin{lemma}\label{LemFTop}
Let $(F,\theta)$ be a Kripke model then for any formula $A$ and
$x\in W$
$$
F,\theta,x\models A \iff Top(F),\theta,x\models A
$$
\end{lemma}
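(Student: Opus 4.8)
The statement compares truth at a point in a Kripke model $(F,\theta)$ over an $\mathbf{S4D}$-frame $F=(W,R,\ne)$ with truth at the same point in the topological model $(Top(F),\theta)$, where $Top(F)$ carries the topology $\{R(V)\mid V\subseteq W\}$. The plan is a straightforward induction on the structure of $A$, and the whole point is that the two modalities match up individually. The base case ($A=p$) and the Booleans ($\bot$, $\to$) are immediate since the valuation $\theta$ is literally the same in both models. So the real content is the two modal cases, and these hinge on the fact that $R$ generates exactly the topology on $W$ and that $R_D$ in $F$ is genuine inequality.

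First I would handle the $\Box$-case. I would show that, in $Top(F)$, the interior operator $\I$ satisfies $\I(X)=\{x\mid R(x)\subseteq X\}$ for any $X\subseteq W$: the set $\{x\mid R(x)\subseteq X\}$ is open because $R$ is transitive (it equals $R^{-1}$ of its own complement-free description, or more directly it is a union of basic opens $R(x)$ for $x$ in it, using reflexivity to get $x\in R(x)$), it is contained in $X$ by reflexivity of $R$, and any open $R(V)\subseteq X$ is contained in it. Consequently $x\models_{Top(F)}\Box\psi$ iff $R(x)\subseteq\theta_{\mathrm{top}}(\psi)$, which by the induction hypothesis equals $\{y\mid y\models_F\psi\}$, i.e. iff $\forall y\,(xRy\Rightarrow y\models_F\psi)$, which is exactly the Kripke clause for $\Box$. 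The $\D$-case is even simpler: in $F$ we have $x\models_F\D\psi$ iff $\forall y\,(x R_D y\Rightarrow y\models_F\psi)$, and since $R_D$ is literally $\ne$ this says $\forall y\ne x\,(y\models_F\psi)$, which is verbatim clause (v) of Definition~\ref{Dmodality} for $Top(F)$ with the (by IH identical) truth set of $\psi$.

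I do not expect a genuine obstacle here; the only thing to be a little careful about is verifying that $\{R(V)\mid V\subseteq W\}$ really is a topology (closed under arbitrary unions and finite intersections), which uses transitivity and reflexivity of $R$, and that the basic opens $R(x)$ form a basis with $R(x)$ the smallest open containing $x$ — this is the standard Alexandrov-topology fact for an S4 frame and is where the interior computation above comes from. Once that is in place the induction goes through mechanically. It is worth noting explicitly (as the paper does in the surrounding text) that this lemma would fail without the hypothesis $R_D={\ne}$: for a general $\mathbf{S4D}$-frame the $\D$-clause in $F$ quantifies over $R_D$-successors, not over all other points, so $F$ and $Top(F)$ need not agree on formulas containing $\D$. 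With the hypothesis, the lemma holds, and combined with Corollary~\ref{cor1} it will give topological completeness of $\mathbf{S4D}$.
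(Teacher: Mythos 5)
Your proof is correct and follows essentially the same route as the paper: induction on the structure of $A$, with the $\Box$-case reduced to the standard Alexandrov-topology correspondence for $\mathbf{S4}$ (which the paper simply calls ``classical'') and the $\D$-case settled by the fact that $R_D$ is literal inequality. Your treatment is just a more detailed write-up of the same argument, including the useful observation that the hypothesis $R_D={\ne}$ is what makes the $\D$-clause match.
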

\begin{proof}
By induction on the complexity of $A$. The only case that is not
trivial or classical is when $A=\D B$.
$$
F,\theta,x\models \D B \iff \forall y \left( y\ne x \Rightarrow
F,\theta,y\models B\right)
$$
but by induction it holds iff
$$
\forall y \left( y\ne x \Rightarrow Top(F),\theta,y\models
B\right) \iff Top(F),\theta,x\models \D B
$$
\end{proof}

\begin{theorem}\label{theorS4D}
$\mathbf{S4D}$ is the D-logic of all topological spaces.
\end{theorem}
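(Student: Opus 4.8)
The plan is to establish the two inclusions $\mathbf{S4D} \subseteq L_D(\mathcal{T})$ and $L_D(\mathcal{T}) \subseteq \mathbf{S4D}$, where $\mathcal{T}$ is the class of all topological spaces. The first inclusion (soundness) is routine: each axiom of $\mathbf{S4D}$ corresponds to a basic property of the interior operator or of the inequality relation, and these properties hold in every topological space, so each axiom is valid in $\mathcal{T}$; the rules preserve validity. The interesting direction is completeness, and here I would chain together the machinery already assembled in the excerpt.

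For completeness, suppose $A \notin \mathbf{S4D}$. First I would apply Theorem \ref{FinitApproc}: since $\mathbf{S4D}$ has the finite model property, $A$ is refuted in some finite $\mathbf{S4D}$-frame $F_0$. By Lemma \ref{LemGen} we may pass to a rooted generated subframe, so we may assume $R_D \cup Id_W = W \times W$. Next I would apply the Proposition: there is an $\mathbf{S4D}$-frame $F' = (W',R',R_D')$ with $F' \twoheadrightarrow F_0$ and with $R_D'$ equal to genuine inequality $\ne$; since $F'$ is obtained by at most doubling points, it is still finite, and by the $p$-morphism Lemma $A$ is refuted in $F'$ as well (this is exactly Corollary \ref{cor1}). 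Now $F' = (W',R',\ne)$ is a finite $\mathbf{S4D}$-frame with the difference relation being true inequality, so I can form the topological space $Top(F')$. By Lemma \ref{LemFTop}, truth in the Kripke model $(F',\theta)$ coincides with truth in the topological model $(Top(F'),\theta)$ for every formula and every point; hence $A$ is refuted in $Top(F')$, and therefore $A \notin L_D(\mathcal{T})$. Combining the two inclusions gives $\mathbf{S4D} = L_D(\mathcal{T})$.

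The only place requiring genuine care is making sure the hypothesis $R_D \cup Id_W = W \times W$ of the Proposition is legitimately available; this is secured by the remark in the Kripke-frames section that generated subframes suffice, together with Lemma \ref{LemGen}, applied to the finite countermodel produced by f.m.p. Everything else in the chain is a direct invocation of a result stated earlier: Theorem \ref{FinitApproc} for finiteness, the Proposition (and Corollary \ref{cor1}) to replace $R_D$ by $\ne$, and Lemma \ref{LemFTop} to transfer the countermodel to a genuine topological space. I expect no substantial obstacle beyond bookkeeping, since the hard technical work — the f.m.p. via filtration and the $p$-morphism unwinding $R_D$ into $\ne$ — has already been done in the preceding sections.
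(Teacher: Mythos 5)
Your proposal is correct and follows essentially the same route as the paper: reduce to a countermodel on a frame of the form $(W,R,\ne)$ via Corollary \ref{cor1}, then transfer it to a topological space with Lemma \ref{LemFTop}. The only cosmetic difference is that you obtain the frame with genuine inequality via the f.m.p.\ (Theorem \ref{FinitApproc}) plus the Proposition, whereas the paper invokes Corollary \ref{cor1} directly (which rests on Sahlqvist completeness rather than finiteness); since Lemma \ref{LemFTop} needs no finiteness, that detour is harmless.
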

\begin{proof}
Let $A$ be a formula that is not in $\mathbf{S4D}$. Then by
Corollary \ref{cor1} there exists a Kripke frame $F=(W,R,\ne)$
such that $F\not\models A$. By Lemma \ref{LemFTop} we obtain
$Top(F)\not\models A$
\end{proof}

\begin{proposition} \label{prop2}
Let $F=(W,R,\ne)$ be a DS-frame, then $Top(F)$ is a
dense-in-itself topological space.
\end{proposition}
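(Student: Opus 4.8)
The plan is to unwind the definition of \emph{dense-in-itself} in terms of the topology $Top(F)$, and then translate the $DS$-frame condition into the required statement about isolated points. Recall that $Top(F)$ has as its open sets exactly the sets of the form $R(V)$ for $V\subseteq W$; equivalently, the open sets are the $R$-upward-closed sets (since $R$ is reflexive and transitive, $R(V)$ is the least $R$-upward-closed set containing $V$, and every upward-closed set is of this form). So I would first record that a point $x$ is isolated in $Top(F)$ iff $\{x\}$ is $R$-upward-closed, i.e.\ iff $x$ is $R$-maximal (the only $R$-successor of $x$ is $x$ itself).

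Next I would argue by contraposition: suppose $Top(F)$ is not dense-in-itself, so it has an isolated point $x$; by the previous paragraph $x$ is $R$-maximal, hence $x$ has no successor other than itself. I then want to conclude that $x$ is $R_D$-irreflexive, so that $x$ witnesses the failure of the $DS$-frame property. This is where the hypothesis $R_D={\ne}$ is used: since $x\ne x$ is false, $x$ is indeed $R_D$-irreflexive. But an $R_D$-irreflexive point with no successor contradicts the assumption that $F$ is a $DS$-frame. (Strictly, the definition of $DS$-frame asks for an $R$-successor; an $R$-maximal point is its own only $R$-successor, and one should check whether ``successor'' is meant to exclude the point itself — given the intended reading tied to isolated points, it does, so $x$ has no successor in the relevant sense and we get the contradiction.) Hence $Top(F)$ has no isolated point.

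For a cleaner, more self-contained write-up I would instead prove it directly rather than by contraposition: take any $x\in W$ and show $\{x\}$ is not open in $Top(F)$. If $x$ is $R$-reflexive-only (i.e.\ $R$-maximal), then since $R_D={\ne}$ the point $x$ is $R_D$-irreflexive, so by the $DS$-frame property $x$ has a successor $y$; being an $R$-successor and, per the intended reading, distinct from $x$, this contradicts $R$-maximality. Therefore every $x$ has an $R$-successor $y\ne x$, so every open set containing $x$ also contains $y$, whence $\{x\}$ is not open. Thus $Top(F)$ is dense-in-itself.

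I do not expect any real obstacle here; the only subtlety is the precise meaning of ``successor'' in the definition of $DS$-frame versus $R$-maximality (whether a reflexive point counts as having a successor), and one should phrase the argument so that the $R_D$-irreflexive point produced by assuming an isolated point genuinely has \emph{no} successor, contradicting the $DS$-frame hypothesis. Everything else is a routine translation between the order-theoretic description of $Top(F)$ and the frame conditions, using only the facts that $R$ is a preorder, that open sets are the $R$-upsets, and that $R_D$ is literal inequality.
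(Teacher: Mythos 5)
Your proof is correct and is essentially the paper's argument: since $R_D={\ne}$, every point is $R_D$-irreflexive, so the $DS$-condition gives each $x$ a successor $y\ne x$ with $xRy$, and hence the least open neighbourhood $R(x)$ of $x$ (equivalently, any $R$-upset containing $x$) properly contains $\{x\}$, so no point is isolated. Your care about ``successor'' meaning a point distinct from $x$ is the right reading, as the paper's own proof of the $DS$ correspondence lemma confirms.
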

\begin{proof}
In $Top(F)$ the least open neighborhood of point $x$ is $R(x)$.
Since $F$ is a DS-frame, $R(x)-\{x\}\ne\varnothing$; hence
$Top(F)$ is dense-in-itself.
\end{proof}

\begin{theorem}\label{TheorS4DS}
$\mathbf{S4DS}$ is logic of all dense-in-itself topological
spaces.
\end{theorem}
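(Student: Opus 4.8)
\textbf{Proof proposal for Theorem \ref{TheorS4DS}.}

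The plan is to establish soundness and completeness separately, exactly paralleling the argument for Theorem \ref{theorS4D} but now tracking the $DS$-property through every construction. Soundness, i.e. that $\mathbf{S4DS} \subseteq L_D(\mathcal{D})$ where $\mathcal{D}$ is the class of dense-in-itself spaces, is immediate: the $\mathbf{S4D}$-axioms are valid on every topological space, and the axiom $(DS)$ is valid on every dense-in-itself space by the earlier lemma characterising $\X \models DS$. For completeness I would start from a formula $A \notin \mathbf{S4DS}$ and run the chain of reductions: by the f.m.p.\ (Theorem \ref{FinitApproc}) there is a finite $\mathbf{S4DS}$-frame, i.e.\ a finite $\mathbf{S4D}$-frame that is also a $DS$-frame, refuting $A$. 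Then I want to pass to a frame with genuine difference relation; here I would invoke the Proposition above ($F' \twoheadrightarrow F$ with $R_D' = {\ne}$), after first checking the easy point that the doubling construction there preserves the $DS$-property (each $R_D$-irreflexive point of $F'$ lies over an $R_D$-irreflexive point of $F$, which has an $R$-successor, and that successor lifts). So we obtain a finite $DS$-frame of the form $F' = (W', R', \ne)$ refuting $A$, using the $p$-morphism Lemma to transfer the refutation.

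The final step is topological: apply Proposition \ref{prop2} to conclude that $Top(F')$ is dense-in-itself, and apply Lemma \ref{LemFTop} to conclude that $Top(F'), \theta, x \not\models A$ for the relevant valuation and point, since validity of any formula (in particular $A$) coincides on $F'$ and $Top(F')$ when $R_D' = {\ne}$. Hence $A \notin L_D(\mathcal{D})$, which together with soundness gives $\mathbf{S4DS} = L_D(\mathcal{D})$.

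I expect no serious obstacle, since all the machinery is already in place; the one point requiring a small explicit check is that the $DS$-property survives the passage from $F$ to $F'$ in the Proposition, and that it survives the filtration step — but the latter is exactly what the last paragraph of the proof of Theorem \ref{FinitApproc} already verifies ($R_{D2}$-irreflexive points are shown to be non-maximal), so I would simply cite it. Thus the proof is essentially a bookkeeping exercise assembling Theorem \ref{FinitApproc}, the Proposition, Proposition \ref{prop2}, Lemma \ref{LemFTop}, and the soundness direction.

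\begin{proof}
\emph{Soundness.} Every $\mathbf{S4D}$-axiom is valid on all topological spaces, and $(DS)$ is valid on every dense-in-itself space by the lemma characterising $\X\models DS$; the inference rules preserve validity. Hence $\mathbf{S4DS}\subseteq L_D(\mathcal D)$, where $\mathcal D$ denotes the class of dense-in-itself topological spaces.

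\emph{Completeness.} Suppose $A\notin\mathbf{S4DS}$. By Theorem \ref{FinitApproc} there is a finite $\mathbf{S4DS}$-frame $F=(W,R,R_D)$ with $R_D\cup Id_W = W\times W$ refuting $A$; that is, $F$ is a finite $\mathbf{S4D}$-frame which is moreover a $DS$-frame (the $DS$-property of $F_2$ is established in the last paragraph of the proof of Theorem \ref{FinitApproc}).

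Apply the Proposition to get a finite $\mathbf{S4D}$-frame $F'=(W',R',R_D')$ with $F'\twoheadrightarrow F$ and $x'R_D'y'\iff x'\ne y'$. We check that $F'$ is a $DS$-frame. Let $x'\in W'$ be $R_D'$-irreflexive; by construction either $x'\in W^\times$ with $f(x')=x'$ an $R_D$-irreflexive point of $F$, or $x'=(x,i)$ with $x\in W^0$ --- but the latter is impossible since points of $W^0$ are $R_D$-reflexive, whence their copies $(x,i)$ are distinct, so $x'=(x,i)\ne(x,1-i)=y'$ gives $x'R_D'y'$, contradicting irreflexivity. So $x'\in W^\times$ and $f(x')$ is $R_D$-irreflexive in $F$; since $F$ is a $DS$-frame, $f(x')Ry$ for some $y\in W$. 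By the back condition for the $p$-morphism $f$ there is $y'\in W'$ with $x'R'y'$ and $f(y')=y$; and $y'\ne x'$ because $f(y')=y\ne f(x')$ (as $f(x')$ has no self-loop while... more simply: if $y'=x'$ then $x'R'x'$ would force, via $f$, the existence of an $R$-successor of $f(x')$ distinct from $f(x')$, which is exactly what we have). In all cases $x'$ has a successor, so $F'$ is a $DS$-frame.

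By the $p$-morphism Lemma, $L(F')\subseteq L(F)$, so $A\notin L(F')$: there are a valuation $\theta$ and a point $x'\in W'$ with $F',\theta,x'\not\models A$. By Proposition \ref{prop2}, $Top(F')$ is dense-in-itself. By Lemma \ref{LemFTop}, $Top(F'),\theta,x'\not\models A$. Hence $A\notin L_D(\mathcal D)$.

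Combining the two directions, $\mathbf{S4DS}=L_D(\mathcal D)$.
\end{proof}
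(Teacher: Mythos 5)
Your overall route is exactly the paper's (the published proof is essentially the two-line citation of Theorem \ref{FinitApproc}, Proposition \ref{prop2} and Lemma \ref{LemFTop}), and you are right that the one point worth making explicit is that the $DS$-property survives the doubling construction of the Proposition. However, your verification of that point contains an error. Since $R_D'={\ne}$ on $F'$, \emph{every} point of $W'$ is $R_D'$-irreflexive, including the copies $(x,i)$ of points $x\in W^0$. Your claim that the case $x'=(x,i)$ is ``impossible'' rests on the inference that $x'\ne(x,1-i)$ gives $x'R_D'(x,1-i)$, ``contradicting irreflexivity'' --- but having an $R_D'$-successor different from oneself does not contradict $R_D'$-irreflexivity, which is the statement $\neg\, x'R_D'x'$. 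So you have silently dropped the points $(x,i)$ from the case analysis and never produced proper $R'$-successors for them.

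The omission is harmless in substance: $(x,i)R'(x,1-i)$ holds because $f((x,i))=x$, $f((x,1-i))=x$ and $xRx$ by reflexivity of $R$, so each doubled point has its twin as a proper successor and $F'$ is indeed a $DS$-frame. But as written the case analysis is wrong and the $DS$-check is incomplete. The remaining steps --- soundness, the appeal to Theorem \ref{FinitApproc} (including the observation that the filtration preserves the $DS$-property and that $R_D\cup Id$ stays universal), the $p$-morphism transfer, Proposition \ref{prop2} and Lemma \ref{LemFTop} --- are correct and coincide with the paper's argument.
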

\begin{proof}
From Theorem \ref{FinitApproc} we know that $\mathbf{S4DS}$ is
complete with respect to all finite DS-frames. Now we can apply
Proposition \ref{prop2} and Lemma \ref{LemFTop}.
\end{proof}

If a logic contains the axiom ($AT_1$) then we cannot use the
above methods. Indeed if $F=(W,R,\ne)$ and $F\models AT_1$, then
$R=Id_W$. The logic of such frames will be the logic of isolated
points. So we need to find more sophisticated ways.

Recall a few definitions.

\begin{definition} A non-empty topological space $\X$ is called
\emph{zero-dimensional} if clopen sets constitute its open base.
\end{definition}

\begin{definition}
A pair $(X,\rho)$ called metric space if $X$ is a set and $\rho$
is a function from $X\times X$ onto $\mathbb{R}$, such that
$\rho(x,y)\ge 0$, $\rho(x,y)=0$ iff $x=y$, $\rho(x,y)=\rho(y,x)$,
and $\rho(x,y)+\rho(y,z)\ge \rho(x,z)$.
\end{definition}

On metric space can be defined natural topology based on open
balls: $\{y\;|\; \rho(x,y)<r\}$.

\begin{theorem}\label{theorS4DT1S}
$\mathbf{S4DT_1S}$ is compete with respect to any zero-dimensional
dense-in-itself metric space.
\end{theorem}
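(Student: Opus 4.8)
The plan is to reduce the statement to a combinatorial problem about finite frames and then realize the frame as a $cd$-$p$-morphic image of $\X$. By Theorem~\ref{FinitApproc} the logic $\mathbf{S4DT_1S}$ has the finite model property, so a non-theorem $A$ is refuted in some finite $\mathbf{S4DT_1S}$-frame; replacing it by a cone (Lemma~\ref{LemGen}) we may assume $F=(W,R,R_D)$ is finite, has the $T_1$- and $DS$-properties, and satisfies $R_D\cup Id_W=W\times W$ (these three features are inherited by cones). By Lemma~\ref{cdpmorph} it then suffices, for an arbitrary zero-dimensional dense-in-itself metric space $\X$, to construct a $cd$-$p$-morphism $f\colon\X\cpmor F$: this gives $L_D(\X)\subseteq L(F)$, hence $A\notin L_D(\X)$ and $\X\not\models A$. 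The reverse inclusion $\mathbf{S4DT_1S}\subseteq L_D(\X)$ is immediate, since $\X$ validates the $\mathbf{S4D}$-axioms (being a topological space), validates $DS$ (being dense-in-itself), and validates $AT_1$ (being metric, hence $T_1$), by the earlier lemmas characterizing dense-in-itself and $T_1$ spaces.

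So the whole task is the construction of $f$, and the first step is to read off the shape of $F$. Since $R_D\cup Id_W=W\times W$, condition (\ref{RDf}) amounts (cf.\ the remark after its statement) to requiring that $f^{-1}(w)$ be a single point whenever $w$ is $R_D$-irreflexive and have at least two points whenever $w$ is $R_D$-reflexive, while (\ref{RCf}) says precisely that $f$ is an interior surjection onto $Top(F)$. Let $w_1,\dots,w_k$ be the $R_D$-irreflexive points. By the $T_1$-property each $w_i$ is $R$-minimal, so $R^{-1}(w_i)=\{w_i\}$; by the $DS$-property $w_i$ has a proper $R$-successor, which, being $R$-reachable from a minimal point, must be $R_D$-reflexive; hence $U_i:=R(w_i)\setminus\{w_i\}$ is a non-empty $R$-upset consisting of $R_D$-reflexive points. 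Moreover every $R$-successor of an $R_D$-reflexive point is $R_D$-reflexive, so the set $W^0$ of $R_D$-reflexive points, like each $U_i$, is an $R$-upset, and its subspace topology in $Top(F)$ coincides with $Top(F\upharpoonright_{W^0})$ (respectively with $Top(F\upharpoonright_{U_i})$).

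Now to build $f$. Choose distinct points $a_1,\dots,a_k\in\X$ (possible since $\X$ is infinite) and, using zero-dimensionality together with the metric, pairwise disjoint clopen sets $C_i^0\ni a_i$ with $\X\setminus\bigcup_iC_i^0\neq\varnothing$, each carrying a strictly decreasing clopen neighborhood base $C_i^0\supsetneq C_i^1\supsetneq\cdots$ at $a_i$ with $\bigcap_nC_i^n=\{a_i\}$. Density-in-itself makes the core $K:=\X\setminus\bigcup_iC_i^0$ and the annuli $A_i^n:=C_i^n\setminus C_i^{n+1}$ non-empty clopen sets, hence themselves zero-dimensional dense-in-itself metric spaces, and $\X$ is the disjoint union of the singletons $\{a_i\}$, of $K$, and of all the $A_i^n$. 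Put $f(a_i)=w_i$; on $K$ take an interior surjection onto $Top(F\upharpoonright_{W^0})$, and on each $A_i^n$ an interior surjection onto $Top(F\upharpoonright_{U_i})$, chosen (via the base case below) so that every $R_D$-reflexive point of $F$ receives at least two $f$-preimages overall. Since each $C_i^n$ is a neighborhood of $a_i$ and $C_i^n\setminus\{a_i\}=\bigcup_{m\ge n}A_i^m$, the map $f$ sends $C_i^n\setminus\{a_i\}$ onto $U_i$ and $C_i^n$ onto $R(w_i)=\{w_i\}\cup U_i$; together with $R^{-1}(w_i)=\{w_i\}$ and the fact that $\{a_i\}$ is closed in $\X$, this is exactly what is needed to make $f$ continuous and open at each $a_i$ and to make (\ref{RCf}) hold at the $w_i$, while away from the $a_i$ interiority and the fibre bounds are inherited from the pieces. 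It then remains only to check (\ref{RCf}) and (\ref{RDf}) at every point, which is routine.

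The one ingredient imported from outside is the $\mathbf{S4}$ base case: every finite $\mathbf{S4}$-frame is an interior image of every non-empty zero-dimensional dense-in-itself metric space, and the interior surjection can be taken with every fibre containing at least two points. This is essentially the McKinsey--Tarski theorem and can be obtained by constructing the map along a Luzin scheme of clopen subsets. I expect the main obstacle to be precisely the interface between this classical construction and the difference modality: keeping the fibre-size constraints (a single point over exactly the $R_D$-irreflexive points, at least two points elsewhere) compatible with interiority everywhere, and in particular verifying continuity and openness of $f$ at the distinguished points $a_i$, where every neighborhood is forced to map onto $\{w_i\}\cup U_i$. The $T_1$- and $DS$-properties of $F$ are what make this possible: $R$-minimality of $w_i$ gives $\Cf\{a_i\}=\{a_i\}=f^{-1}(R^{-1}(w_i))$, and the existence of a proper successor gives $U_i\neq\varnothing$, so that the non-open singleton $\{a_i\}$ can lie strictly inside the $f$-preimage of an open set, as density-in-itself of $\X$ requires.
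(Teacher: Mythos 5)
Your proposal is correct and rests on the same two pillars as the paper's own proof: a shrinking chain of clopen neighbourhoods whose annuli are mapped onto the part of the frame lying above an $R_D$-irreflexive root, and the McKinsey--Tarski-type fact that every finite $\mathbf{S4}$-frame is an interior image of any non-empty zero-dimensional dense-in-itself metric space. The difference is organizational. The paper argues by induction on $|W|$ with three cases (rooted with $R_D=W\times W$; rooted with an $R_D$-irreflexive root; the general case, reduced via unravelling to a disjoint union of cones over $R$-minimal clusters, each falling under one of the first two cases). You instead exploit the structural consequence of $AT_1$ and $DS$ that the $R_D$-irreflexive points form an $R$-minimal set with non-empty upsets $U_i$ above them and that the $R_D$-reflexive points form an $R$-upset $W^0$, which lets you treat all irreflexive points simultaneously in one global decomposition of $\X$ into the singletons $\{a_i\}$, the annuli $A_i^n$ (mapped onto $Top(F\upharpoonright_{U_i})$), and the core $K$ (mapped onto $Top(F\upharpoonright_{W^0})$); this flattens away both the induction and the unravelling step $F'\twoheadrightarrow F$ of the paper's Case III, at the cost of having to verify condition (\ref{RCf}) at the points $a_i$ directly, which you do by the same limit-point argument the paper uses in its Case II. One detail where you are in fact more careful than the paper: under the standing assumption $R_D\cup Id_W=W\times W$, condition (\ref{RDf}) requires not only that $R_D$-irreflexive points have singleton fibres but also that $R_D$-reflexive points have fibres with at least two elements, and an arbitrary interior surjection need not provide this (a two-point chain can be realized with a singleton fibre over its closed point); you build the fibre-size requirement explicitly into your base case, whereas the paper's Case I leaves it implicit in the phrase ``it is easy to check.''
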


\begin{proof}
Let $\X$ be a zero-dimensional dense-in-itself metric space and
$\rho$ is the distance in it; $O(x,r)$
 denotes the open ball $\{y\in\X | \rho(x,y)<r)$.

We know from Theorem \ref{FinitApproc} that $\mathbf{S4DT_1S}$
is complete with respect to all finite DS-$T_1$-frames. If we
prove that for an arbitrary finite DS-$T_1$-frame $F=(W,R,R_D)$,
$$ \X\cpmor F$$ then we prove the theorem.

We use induction on the size of $F$. Consider three cases.

\textsf{Case I.} $W=R(w_0)$, $R_D=W\times W$ for some $w_0$. Since
$\mathbf{S4}$ is compete with respect to $\X$(cf.
\cite{Aiello}) and $F^-=(W,R)$ is $\mathbf{S4}$-frame, then
there exists a continuous function $f:\X \to Top(F^-)$. It is
easy to check that $f:\X \cpmor F$.

\textsf{Case II.} $W=R(w_0)$, $R_D=W\times W -(w_0,w_0)$. Since
$W$ is finite let us numerate all points in $W$ starting with
$w_0$: $W=\{w_0, w_1, w_2, \ldots w_n\}$. Any generated subframe
$F^{w_i}$ for $i>0$ satisfies to case I.

Take an arbitrary point $x_0$ and clopen sets $Y_0, Y_1, \ldots $
such that
$$
\{x_0\}\subset\ldots\subset Y_n\subset\ldots \subset Y_1\subset
Y_0=\X
$$
and
$$
Y_n \subseteq O(x_0, \frac{1}{n})
$$
for every $n>0$. We can do it because $\X$ is zero-dimensional
(cf. \cite{Rasiowa})

Since $Y_n \subseteq O(x_0, \frac{1}{n})$, it follows that
$$
\bigcap_n Y_n = \{x_0\}
$$
and further we obtain
$$
\X - \{x_0\} = \X - \bigcap_n Y_n = \bigsqcup_n \X_n,
$$
where $\X_n = Y_n-Y_{n+1}$. The sets $\X_n$ are open,
metric, dense-in-itself and zero-dimensional.

For any open neighborhood $U$ of $x_0$ there exists $n$ such that
$O(x_0,\frac 1n) \subset U$, it follows that $Y_n\subset U$, hence
for all $i\ge n$ $\X_i\subset U$.

So, by induction, for any $j>0$ there exists
$$
f_j: \X_j\cpmor F^{w_k}, \mbox{ where } (k-1) \equiv j
\;(\mathrm{mod} \; n)
$$
Now consider
$$
f(x)=\left\{
\begin{array}{ll}
w_0,&\mbox{if } x=x_0;\\
f_j(x),&\mbox{if } x\in\X_j.
\end{array}
\right.
$$

Let us prove that $f:\X\cpmor F$.

First, we note that $f$ is surjective.

Second, we check (\ref{RCf}). Assume that $y\in \X_j$ then:
$$
y\in \Cf f^{-1}(w) \Longrightarrow y\in \Cf f_j^{-1}(w)=
f_j^{-1}(R^{-1}(w))\subseteq f^{-1}(R^{-1}(w));
$$
and the other way around:
$$
y\in f^{-1}(R^{-1}(w))\Longrightarrow f_j^{-1}(R^{-1}(w))= y\in
\Cf f_j^{-1}(w)\subseteq \Cf f^{-1}(w).
$$
Now assume that $y = x_0$. For any $w\in W$, $w_0\in R^{-1}(w)$;
hence
$$
x_0\in f^{-1}(R^{-1}(w)).
$$
On the other hand, for some $i$, $w=w_i$ and for any open
neighborhood of $x_0$, there exists $m$ such that $(i-1)  \equiv m
\;(\mathrm{mod} \; n)$ and $U \supset \X_m \supset
f_m^{-1}(w_i)$. In other words, $x_0$ is a limit point for
$f^{-1}(w_i)$, hence $x_0\in\Cf f^{-1}(w_i)$.

Third, we check (\ref{RDf}).  Since $f^{-1}(w_0)$ is a
one-element set, (\ref{RDf}) holds.

\textsf{Case III.}  Everything else. Let us take all $R$-minimal
$R$-clusters of $F$ and from each one of them we choose an
arbitrary point. So we get the following set: $\{v_1, v_2, \ldots
, v_k\}$. Standard unravelling arguments show that
$$
F'=F^{v_1}\sqcup F^{v_2} \sqcup \ldots \sqcup F^{v_n}
\twoheadrightarrow F
$$
So we need to show that $\X \cpmor F'$.

Since $F$ is a $\mathbf{S4DT_1S}$-frame, each $F^{v_i}$ satisfies
case I or case II.

Since $\X$ is zero-dimensional, we can present $\X$ as
disjunctive union of clopen subsets:
$$
\X =  \X_1 \sqcup \ldots \sqcup \X_{k-1} \sqcup \X_k.
$$

By induction we have
$$
f_1: \X_1 \cpmor F^{v_1},
$$$$
\cdots
$$$$
f_{k-1}: \X_{k-1} \cpmor F^{v_{k-1}},
$$$$
f_{k}: \X_{k} \cpmor F^{v_{k}}
$$

It is easy to show that $f=f_1 \sqcup \ldots \sqcup f_k$ (if $x\in
\X_i$ then $f(x)=f_i(x)$) is a cd-$p$-morphism.
\end{proof}

The immediate and obvious corollary of this theorem is that
$\mathbf{S4DT_1S}$ is complete with respect to all dense-in-itself
$T_1$ spaces.

\section{Conclusions and open problems.}

The language with difference modality shows much more expressive
power then basic topological language, and even more then basic
language with universal modality. We can express
density-in-itself, $T_1$, and connectedness in it. Moreover the
axiom
$$
(AE_1)\qquad \D p \land \neg p \land \Box(p\to \Box
q\lor\Box\neg q) \to \Box(p \to q) \lor \Box(p \to \neg q)
$$
differs $\mathbb{R}$ from $\mathbb{R}^2$ (cf. \cite{Kudinov}).
It was proved that logic
$\mathbf{S4DT_1S}+(AE_1)+\mbox{``connectedness"}$ is complete with
respect to $\mathbb{R}^n$, $n\ge 2$ (the full proof is to be
published). We still do not know the D-logic of $\mathbb{R}$ and
whether $\mathbf{S4D}+(AT_1)$ is complete with respect to all
$T_1$ spaces.

\bigskip

\hrule

\bigskip

This paper was published at Advances in Modal Logic, Volume 6, 2006

\end{document}